\DeclareMathOperator{\rank}{rank}
\newcommand{\bfs}{\boldsymbol}
\newtheorem{theorem}{Theorem}[section]
\newtheorem{corollary}[theorem]{Corollary}
\newtheorem*{fact*}{Fact}
\newtheorem{proposition}[theorem]{Proposition}
\theoremstyle{definition}
\newtheorem*{claim}{Claim}
\newtheorem{remark}[theorem]{Remark}
\numberwithin{equation}{section}
\newcommand{\Z}{\mathbb Z}
\newcommand{\A}{\mathbb A}
\newcommand{\F}{\mathbb F}
\newcommand{\K}{\mathbb K}
\newcommand{\Pp}{\mathbb P}
\newcommand{\fp}{\F_{\hskip-0.7mm p}}
\newcommand{\fq}{\F_{\hskip-0.7mm q}}
\newcommand{\cfq}{\overline{\F}_{\hskip-0.7mm q}}
\def\ifm#1#2{\relax \ifmmode#1\else#2\fi}
\newcommand{\klk}    {\ifm {,\ldots,} {$,\ldots,$}}
\begin{document}

\title[Systems of diagonal equations over finite fields]{On the number of simultaneous solutions of certain diagonal equations over finite fields}
\author[M. P\'erez]{
Mariana P\'erez${}^{1,3}$}
\author[M. Privitelli]{
Melina Privitelli${}^{1,2}$}

\address{${}^{1}$ Consejo Nacional de Investigaciones Científicas y Técnicas (CONICET),
Ar\-gentina}
\address{${}^{2}$
Universidad Nacional de Gene\-ral Sarmiento, Instituto de Ciencias, J.M. Guti\'errez 1150
(B1613GSX) Los Polvorines, Buenos Aires, Argentina}
\email{mprivite@campus.ungs.edu.ar}
\address{${}^{3}$
Universidad Nacional de Hurlingham, Instituto de Tecnología e Ingeniería, Av. Gdor. Vergara 2222
(B1688GEZ) Villa Tesei, Buenos Aires, Argentina}
\email{mariana.perez@unahur.edu.ar}

\thanks{The authors were partially supported by the grants
PIP CONICET 11220130100598, PIO CONICET-UNGS 14420140100027, ICI-UNGS 30/1146 and PICTO-UNAHUR-2019-00012}.

\keywords{Finite fields, systems of diagonal equations, rational solutions, complete intersections, singular locus.}%
\subjclass[2010]{11T06, 05E05, 14G05, 14G15, 11G25}

\begin{abstract}
In this paper we obtain explicit estimates and existence results on the number of $\fq$-rational solutions of certain systems defined by families of diagonal equations over finite fields. Our approach relies on the study of the geometric properties of the varieties defined by the systems involved. We apply these results  to a generalization of Waring's problem and the distribution of solutions of congruences modulo a prime number.
\end{abstract}

\maketitle

\section{Introduction}
Let $\fq$ be the finite field of $q$ elements. It is a classical  problem  to determine or estimate the number $N$ of $\fq$--rational solutions (i.e. solutions with coordinates in $\fq$) of systems of polynomial equations over $\fq$ (see, e.g., \cite{MuPa13}). Particulary, the systems of diagonal equations
\begin{equation}\label{eq: system intro}\left \{\begin{array}{ccl}
a_{11}X_1^{d_1} & + a_{12}X_2^{d_2} + \cdots + & a_{1t}X_t^{d_t} = b_1 \\
a_{21}X_1^{d_1} & + a_{22}X_2^{d_2} + \cdots + & a_{2t}X_t^{d_t} = b_2 \\
\;\vdots &  &\quad\vdots \\
a_{n1}X_1^{d_1} &+ a_{n2}X_2^{d_2} + \cdots +& a_{nt}X_t^{d_t} = b_n,
\end{array}\right.
\end{equation}
with $b_1,\ldots,b_n\in \fq$, have been considered in the literature because the study of its set of $\fq$--rational solutions has several applications to different areas of mathematics, such as the theory of cyclotomy, Waring’s problem and the graph coloring problem (see, e.g. \cite{LiNi83} and \cite{MuPa13}). Additionally, information on the number $N$ is very useful  in different  aspects of coding theory such as 
the weight distribution of some cyclic codes (\cite{Zh10} and \cite{Zh15}) and the
covering radius of certain cyclic codes (\cite{He85} and \cite{MoCa03}). 

The case $n=1$ of the system \eqref{eq: system intro} has been extensively studied. In general, there are no explicit formulas of  the number $N$, except for some very particular diagonal equations. For this reason, many articles focus on providing estimates on the number $N$ (see, e.g. \cite{LiNi83}, \cite{MuPa13} and \cite{Weil49}). In \cite{PP20}, we obtain existence results and estimates on the number of $\fq$-rational solutions of some variants of diagonal equations.

In comparison with diagonal equations, there are much fewer results about the number of $\fq$--rational solutions of systems of the type \eqref{eq: system intro}. There are explicit formulas for the number $N$ for some very particular cases (see, e.g., \cite{Cao2016} and  \cite{W98}). On the other hand,  A. Tiet\"{a}v\"{a}inen  provides existence results for some special families of systems of type \eqref{eq: system intro} (see \cite{T64}, \cite{T65 A}, \cite{T65} and \cite{T67}). In   \cite{Spackman79} and \cite{Spackman81}  K. Spackman  obtains  the following estimate
  using elementary methods involving character sums which holds under certain conditions on a parameter  which measures the extent to which the coefficients' matrix is non--singular over $\fq$: 
$$N=q^{t-n}+\mathcal{O}(q^{(t-1)/2}),$$
where the implied constant depends only on $d_1, \ldots, d_t, n$ and $t$, but it is not explicitly given. 

In this article we obtain an explicit estimate on the number $N$ using tools of  algebraic geometry. 
Furthemore, our technique allows us  to consider the case $d_1=\cdots=d_t$ to study more general systems of diagonal equations such us systems of Markoff-Hurwitz-type equations, systems of Dickson equations, and systems of deformed diagonal equations. 
More precisely, we consider the following more general system: \begin{equation}\label{eq: system intro 2}\left \{\begin{array}{ccl}
a_{11}X_1^{d_1} & + a_{12}X_2^{d_2} + \cdots + & a_{1t}X_t^{d_t} = g_1(X_1,\ldots,X_k)\\
a_{21}X_1^{d_1} & + a_{22}X_2^{d_2} + \cdots + & a_{2t}X_t^{d_t} =g_2(X_1,\ldots,X_k) \\
\;\vdots &  &\quad\vdots \\
a_{n1}X_1^{d_1} &+ a_{n2}X_2^{d_2} + \cdots +& a_{nt}X_t^{d_t} = g_n(X_1,\ldots,X_k),
\end{array}\right.
\end{equation} where $g_1, \ldots,g_n \in \fq[X_1, \ldots,X_k]$ are such that $g_j\in \fq$ for $1 \leq j \leq n$ or $0\leq \deg(g_j)<d_t$ for $1 \leq j \leq n$ and there exists $1\leq i\leq n$ such that $0<\deg(g_i)$. The main result of this article is the following.
\begin{theorem} \label{estimate: number of solutions}
Suppose that all the $(n\times n)$--submatrix of the coefficients' matrix has rank $n$, $d_1>\cdots>d_t \geq 2$, and $\mathrm{char}(\fq)$ does  not divide $d_i$ for $1\leq i \leq t$.
 We have the following estimates on $N$: 
\begin{itemize}
	\item  If $g_i \in \fq$ and $n\leq t-2$, then $N$ satisfies:
	$$\big|N-q^{t-n}\big|\leq  q^{\frac{t-n+1}{2}}(6n\cdot d_1)^{t+1}.$$
 \item If $0\leq\deg(g_j)<d_t$, there exists $1\leq i \leq n$ such that $\deg(g_i)>0$, and $1 \leq n \leq \frac{t-1}{2}$, satisfies:
$$\big|N-q^{t-n}\big|\leq  q^{\frac{t-n+k}{2}}(6n\cdot d_1)^{t+1}.$$
\end{itemize}
\end{theorem}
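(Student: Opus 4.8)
The plan is to translate the counting problem into estimating the number of $\fq$-rational points of the affine variety $V \subseteq \A^t$ (or $\A^{t}$ with $k \le t$ in the second case) defined by the system \eqref{eq: system intro 2}, and then to apply the standard machinery relating $|V(\fq)|$ to the dimension, degree, and singular locus of $V$. Concretely, I would first show that $V$ is a set-theoretic (indeed ideal-theoretic) complete intersection of dimension $t-n$: the hypothesis that every $(n\times n)$-submatrix of the coefficient matrix has rank $n$ should guarantee that the $n$ diagonal forms $F_j = \sum_i a_{ji} X_i^{d_i} - g_j$ form a regular sequence, so $V$ has dimension $t-n$ and degree bounded by $\prod_j \deg(F_j) \le d_1^n$. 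The key structural input is an effective version of the Lang--Weil-type bound for normal (or just "well-behaved") complete intersections, of the form $\big||V(\fq)| - q^{\dim V}\big| \le C(V)\, q^{(\dim V + \operatorname{codim}_{\mathrm{sing}})/2}$, where $C(V)$ is polynomial in the degree; the exponents $\frac{t-n+1}{2}$ and $\frac{t-n+k}{2}$ in the two cases strongly suggest that the codimension of the singular locus of (the projective closure of) $V$ is $\ge t-n-1$ in the first case and $\ge t-n-k$ in the second, so that one gains a genuine square-root saving over the trivial bound $q^{t-n}$.

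The heart of the argument, and the step I expect to be the main obstacle, is the geometric analysis of the singular locus of $V$ — or more precisely of the variety $V_{\mathrm{proj}} \subseteq \Pp^{t}$ (resp. a weighted-projective or suitably homogenized model) whose points at infinity must also be controlled. A point of $V$ is singular exactly where the Jacobian matrix $\big(\partial F_j / \partial X_\ell\big)$ drops rank below $n$. Because $F_j = \sum_i a_{ji} X_i^{d_i} - g_j$, the partial derivative with respect to $X_i$ is $a_{ji} d_i X_i^{d_i-1} - \partial g_j/\partial X_i$; since $\mathrm{char}(\fq) \nmid d_i$, the dominant diagonal part $a_{ji} d_i X_i^{d_i-1}$ vanishes only when $X_i = 0$. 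One then argues that for the Jacobian to drop rank many coordinates $X_i$ must vanish, and counting how many must vanish — combined with the rank-$n$ hypothesis on the coefficient submatrices, which forbids degenerate vanishing of maximal minors — yields the codimension estimate for the singular locus. In the second case the perturbation terms $g_j(X_1,\dots,X_k)$ involve only the first $k$ variables and have degree $< d_t$, so their derivatives cannot interfere with the diagonal structure in the variables $X_{k+1},\dots,X_t$; this is what restricts the "bad" locus to something of codimension $\ge t-n-k$ and forces the bound $1 \le n \le \frac{t-1}{2}$ (so that $t-n-k \ge 1$ remains, roughly, positive after accounting for $k \le t-n$). Controlling the behaviour at infinity, where the highest-degree form $\sum a_{ji} X_i^{d_i}$ (or its relevant part) dominates and the analysis must be redone on the hyperplane at infinity, is the delicate bookkeeping part.

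Once the dimension is $t-n$, the degree is $\le d_1^n$, and the singular locus of the projective model has codimension $\ge t-n-1$ (resp. $\ge t-n-k$), I would invoke the explicit estimate for complete intersections (the Cafure--Matera--type bound, or whichever effective point-counting theorem the paper has set up earlier) to obtain
$$
\big|N - q^{t-n}\big| \le q^{\frac{t-n+1}{2}}\big(\text{poly in }\deg V\big)
\quad\text{resp.}\quad
\big|N - q^{t-n}\big| \le q^{\frac{t-n+k}{2}}\big(\text{poly in }\deg V\big),
$$
and then I would bound the polynomial-in-degree factor crudely by $(6n d_1)^{t+1}$, using $\deg V \le d_1^n$ and $n \le t$; the constant $6$ and the exponent $t+1$ are exactly the shape one gets from the standard explicit Lang--Weil constants, so this last step is purely a matter of plugging in and simplifying. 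The hypotheses $n \le t-2$ (first case) and $n \le \frac{t-1}{2}$ (second case) are precisely what is needed to ensure $\dim V \ge 1$ and that the singular-locus codimension bound is nontrivial, so that the error term is genuinely smaller than the main term for $q$ large.
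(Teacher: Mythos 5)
Your proposal follows essentially the same route as the paper: establish that $V$ is a complete intersection of dimension $t-n$ via a regular-sequence argument from the rank-$n$ hypothesis, bound the singular locus through the Jacobian (where the diagonal terms $a_{ji}d_iX_i^{d_i-1}$ force many vanishing coordinates, giving singular locus of dimension at most $0$, resp.\ at most $k-1$, rather than the slightly weaker codimension bounds you quote), check that the projective closure acquires no singularities at infinity, and then apply the Ghorpade--Lachaud estimate for singular complete intersections together with Deligne's bound at infinity to extract the explicit constant $(6nd_1)^{t+1}$. This matches the paper's proof in all essential respects, so no further comparison is needed.
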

Our techniques also allow us to replace $X_i^{d_i}$ by $h_i(X_i)$ for $1\leq i \leq n$ where $ h_i\in \fq[T]$ and $\deg (h_i)=d_i$. In particular, we examine the case where $h_i(X_i)$ is the Dickson's polynomial over $\fq$ $D_{d_i}(X_i,a)$ of degree $d_i$ with parameter $a \in \fq$ and we obtain a similar result to Theorem \ref{estimate: number of solutions} for this case.

The paper is organized as follows. In Section 2 we collect the notions of algebraic geometry we use throughout the article. In Section 3 we 
study the geometric properties of the varieties associated to the system \eqref{eq: system intro 2} and we settle Theorem \ref{estimate: number of solutions}.  As a consequence, we obtain existence results of $\fq$--rational solutions of these type of  systems. We also study a particular example when $g_i=b_i X_1^{c_{i_1}}\cdots X_n^{c_{i_n}}-a_i$ (the generalized Markoff-Hurwitz-type equations systems). In Section 4 we consider some variants of systems of diagonal equations, such as the Dickson equations.
Finally, in Section 5 we study two applications of our estimates: Generalized Waring's problem over finite fields and the distributions of solutions of systems of congruences module a prime number.
\section{Basic notions of algebraic geometry}
%
In this section we collect the basic definitions and facts of
algebraic geometry that we need in the sequel. We use standard
notions and notations which can be found in, e.g., \cite{Kunz85},
\cite{Shafarevich94}.

Let $\K$ be any of the fields $\fq$ or $\cfq$, the closure of $\fq$. We denote by $\A^r$
the affine $r$--dimensional space $\cfq{\!}^{r}$ and by $\Pp^r$ the
projective $r$--dimensional space over $\cfq{\!}^{r+1}$. Both spaces
are endowed with their respective Zariski topologies over $\K$, for
which a closed set is the zero locus of a set of polynomials of
$\K[X_1,\ldots, X_r]$, or of a set of homogeneous polynomials of
$\K[X_0,\ldots, X_r]$.

A subset $V\subset \Pp^r$ is a {\em projective variety defined over}
$\K$ (or a projective $\K$--variety for short) if it is the set of
common zeros in $\Pp^r$ of homogeneous polynomials $F_1,\ldots, F_m
\in\K[X_0 ,\ldots, X_r]$. Correspondingly, an {\em affine variety of
	$\A^r$ defined over} $\K$ (or an affine $\K$--variety) is the set of
common zeros in $\A^r$ of polynomials $F_1,\ldots, F_{m} \in
\K[X_1,\ldots, X_r]$. We think a projective or affine $\K$--variety
to be equipped with the induced Zariski topology. We shall denote by
$\{F_1=0,\ldots, F_m=0\}$ or $V(F_1,\ldots,F_m)$ the affine or
projective $\K$--variety consisting of the common zeros of
$F_1,\ldots, F_m$.

In the remaining part of this section, unless otherwise stated, all
results referring to varieties in general should be understood as
valid for both projective and affine varieties.

A $\K$--variety $V$ is {\em irreducible} if it cannot be expressed
as a finite union of proper $\K$--subvarieties of $V$. Further, $V$
is {\em absolutely irreducible} if it is $\cfq$--irreducible as a
$\cfq$--variety. Any $\K$--variety $V$ can be expressed as an
irredundant union $V=\mathcal{C}_1\cup \cdots\cup\mathcal{C}_s$ of
irreducible (absolutely irreducible) $\K$--varieties, unique up to
reordering, called the {\em irreducible} ({\em absolutely
	irreducible}) $\K$--{\em components} of $V$.

For a $\K$--variety $V$ contained in $\Pp^r$ or $\A^r$, its {\em
	defining ideal} $I(V)$ is the set of polynomials of $\K[X_0,\ldots,
X_r]$, or of $\K[X_1,\ldots, X_r]$, vanishing on $V$. The {\em
	coordinate ring} $\K[V]$ of $V$ is the quotient ring
$\K[X_0,\ldots,X_r]/I(V)$ or $\K[X_1,\ldots,X_r]/I(V)$. The {\em
	dimension} $\dim V$ of $V$ is the length $n$ of a longest chain
$V_0\varsubsetneq V_1 \varsubsetneq\cdots \varsubsetneq V_n$ of
nonempty irreducible $\K$--varieties contained in $V$. 
We say that $V$ has {\em pure dimension} $n$ if every irreducible
$\K$--component of $V$ has dimension $n$. A $\K$--variety of $\Pp^r$
or $\A^r$ of pure dimension $r-1$ is called a $\K$--{\em
	hypersurface}. A $\K$--hypersurface of $\Pp^r$ (or $\A^r$) can also
be described as the set of zeros of a single nonzero polynomial of
$\K[X_0,\ldots, X_r]$ (or of $\K[X_1,\ldots, X_r]$).

The {\em degree} $\deg V$ of an irreducible $\K$--variety $V$ is the
maximum of $|V\cap L|$, considering all the linear spaces $L$ of
codimension $\dim V$ such that $|V\cap L|<\infty$. More generally,
following \cite{Heintz83} (see also \cite{Fulton84}), if
$V=\mathcal{C}_1\cup\cdots\cup \mathcal{C}_s$ is the decomposition
of $V$ into irreducible $\K$--components, we define the degree of
$V$ as
$$\deg V:=\sum_{i=1}^s\deg \mathcal{C}_i.$$
The degree of a $\K$--hypersurface $V$ is the degree of a polynomial
of minimal degree defining $V$. 
%
We shall use the following {\em B\'ezout inequality} (see
\cite{Heintz83}, \cite{Fulton84}, \cite{Vogel84}): if $V$ and $W$
are $\K$--varieties of the same ambient space, then
\begin{equation}\label{eq: Bezout}
\deg (V\cap W)\le \deg V \cdot \deg W.
\end{equation}

Let $V\subset\A^r$ be a $\K$--variety, $I(V)\subset \K[X_1,\ldots,
X_r]$ its defining ideal and $x$ a point of $V$. The {\em dimension}
$\dim_xV$ {\em of} $V$ {\em at} $x$ is the maximum of the dimensions
of the irreducible $\K$--components of $V$ containing $x$. If
$I(V)=(F_1,\ldots, F_m)$, the {\em tangent space} $\mathcal{T}_xV$
to $V$ at $x$ is the kernel of the Jacobian matrix $(\partial
F_i/\partial X_j)_{1\le i\le m,1\le j\le r}(x)$ of $F_1,\ldots, F_m$
with respect to $X_1,\ldots, X_r$ at $x$. We have
$\dim\mathcal{T}_xV\ge \dim_xV$ (see, e.g., \cite[page
94]{Shafarevich94}). The point $x$ is {\em regular} if
$\dim\mathcal{T}_xV=\dim_xV$; otherwise, $x$ is called {\em
	singular}. The set of singular points of $V$ is the {\em singular
	locus} of $V$; it is a closed $\K$--subvariety of
$V$. A variety is called {\em nonsingular} if its singular locus is
empty. For projective varieties, the concepts of tangent space,
regular and singular point can be defined by considering an affine
neighborhood of the point under consideration.

%
%
\subsection{Rational points}
Let $\Pp^r(\fq)$ be the $r$--dimensional projective space over $\fq$
and $\A^r(\fq)$ the $r$--dimensional $\fq$--vector space $\fq^r$.
For a projective variety $V\subset\Pp^r$ or an affine variety
$V\subset\A^r$, we denote by $V(\fq)$ the set of $\fq$--rational
points of $V$, namely $V(\fq):=V\cap \Pp^r(\fq)$ in the projective
case and $V(\fq):=V\cap \A^r(\fq)$ in the affine case. For an affine
variety $V$ of dimension $n$ and degree $\delta$, we have the
following bound (see, e.g., \cite[Lemma 2.1]{CaMa06}):
\begin{equation}\label{eq: upper bound -- affine gral}
|V(\fq)|\leq \delta\, q^n.
\end{equation}
On the other hand, if $V$ is a projective variety of dimension $n$
and degree $\delta$, then we have the following bound (see
\cite[Proposition 12.1]{GhLa02a} or \cite[Proposition 3.1]{CaMa07};
see \cite{LaRo15} for more precise upper bounds):
\begin{equation*}\label{eq: upper bound -- projective gral}
|V(\fq)|\leq \delta\, p_n,
\end{equation*}
where $p_n:=q^n+q^{n-1}+\cdots+q+1=|\Pp^n(\fq)|$.
%
%
\subsection{Complete intersections}\label{subsec: complete intersections}
Elements $F_1,\ldots, F_m$ in $\mathbb{K}[X_1,\ldots,X_r]$ or
$\mathbb{K}[X_0,\ldots,X_r]$ form a \emph{regular sequence} if $F_1$
is nonzero and no $F_i$ is zero or a zero divisor in the quotient
ring $\mathbb{K}[X_1,\ldots,X_r]/ (F_1,\ldots,F_{i-1})$ or
$\mathbb{K}[X_0,\ldots,X_r]/ (F_1,\ldots,F_{i-1})$ for $2\leq i \leq
m$.  In such a case, the (affine or projective) variety $V := V (F_1,\ldots,F_m)$ they define is equidimensional of dimension $r-m$, and is called a \emph{set–theoretic complete intersection}.
 Furthermore, $V$ is called an (ideal–theoretic) \emph{complete intersection} if its ideal $I(V)$ over $K$ can be generated by $r-m$ polynomials.    
%
 We shall frequently use the following criterion to
prove that a variety is a complete intersection (see, e.g.,
\cite[Theorem 18.15]{Eisenbud95}).
\begin{theorem}\label{theorem: eisenbud 18.15}
	Let $F_1,\ldots,F_m\in\mathbb{K}[X_1,\ldots,X_r]$ be polynomials
	which form a regular sequence and let
	$V:=V(F_1,\ldots,F_m)\subset\A^r$. Denote by
	$(\partial\bfs{F}/\partial \bfs{X})$ the Jacobian matrix of
	$F_1,\ldots,F_m$ with respect to $X_1,\ldots,X_r$. If the subvariety
	of $V$ defined by the set of common zeros of the maximal minors of
	$(\partial\bfs{F}/\partial \bfs{X})$ has codimension at least one in
	$V$, then $F_1,\ldots,F_m$ define a radical ideal. In particular,
	$V$ is a complete intersection.
\end{theorem}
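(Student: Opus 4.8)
The plan is to deduce the statement from three standard ingredients concerning the quotient ring $A:=\K[X_1,\ldots,X_r]/(F_1,\ldots,F_m)$: that $A$ is Cohen--Macaulay, the Jacobian criterion describing the locus where the localizations of $A$ fail to be regular, and Serre's criterion $(R_0)+(S_1)$ for reducedness. Note that here $\K$ is a finite field or its algebraic closure, hence perfect, so no geometric--reducedness subtlety intervenes.

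First I would record that, since $F_1,\ldots,F_m$ form a regular sequence in the regular ring $\K[X_1,\ldots,X_r]$, the ring $A$ is a complete intersection, hence Cohen--Macaulay. Two consequences will be used: $A$ is equidimensional of dimension $r-m$ (every associated prime has height $m$, by the unmixedness theorem), so $V=V(F_1,\ldots,F_m)$ has pure codimension $m$; and $A$ satisfies Serre's condition $(S_1)$, i.e. it has no embedded primes.

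Next I would invoke the Jacobian criterion over the perfect field $\K$: since $A$ is a complete intersection of codimension $m$, the set of points of $V$ at which the corresponding localization of $A$ is \emph{not} a regular local ring is exactly the subvariety of $V$ defined by the common zeros of the maximal ($m\times m$) minors of $(\partial\bfs{F}/\partial\bfs{X})$. By hypothesis this subvariety has codimension $\ge 1$ in $V$; since $V$ is equidimensional, it therefore contains no irreducible component of $V$, equivalently no minimal prime of $A$. Hence for every minimal prime $\mathfrak{p}$ of $A$ the local ring $A_{\mathfrak p}$ is regular, and being zero--dimensional it is a field --- this is precisely condition $(R_0)$. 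Combining $(R_0)$ with the $(S_1)$ obtained above, Serre's criterion yields that $A$ is reduced. Thus $(F_1,\ldots,F_m)$ is a radical ideal and coincides with $I(V)$; since $I(V)$ is then generated by $m=r-\dim V$ polynomials, $V$ is an ideal--theoretic complete intersection.

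The step I expect to be the crux --- and the one I would simply quote from \cite[Theorem~18.15]{Eisenbud95} rather than reprove --- is the precise form of the Jacobian criterion used above: that at a prime of $A$ of height $m$ (which all minimal primes are, by equidimensionality) the local ring is regular if and only if the Jacobian matrix has rank $m$ there. It is exactly here that the complete intersection hypothesis (ensuring pure codimension $m$, so that ``rank $<m$'' is the correct degeneracy condition) and the perfectness of $\K$ are needed; without equidimensionality the locus cut out by the $m\times m$ minors need not coincide with the non-regular locus of $A$.
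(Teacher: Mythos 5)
Your proposal is correct: the paper offers no proof of this statement, since it is quoted verbatim as a standard result (Eisenbud, Theorem 18.15), and your sketch --- complete intersection $\Rightarrow$ Cohen--Macaulay $\Rightarrow$ equidimensionality and $(S_1)$, Jacobian criterion on the degeneracy locus $\Rightarrow$ $(R_0)$, Serre's criterion $\Rightarrow$ reduced --- is precisely the standard argument underlying that citation. The only quibble is terminological: the minimal primes of $A$ have height $0$ in $A$ (it is their preimages in $\K[X_1,\ldots,X_r]$ that have height $m$), and for the direction of the Jacobian criterion you actually use (full rank of the Jacobian at a prime forces regularity of the localization) perfectness of $\K$ is not even needed.
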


If $V\subset\Pp^r$ is a complete intersection defined over $\K$ of
dimension $r-m$, and $F_1 ,\ldots, F_m$ is a system of homogeneous
generators of $I(V)$, the degrees $d_1,\ldots, d_m$ depend only on
$V$ and not on the system of generators. Arranging the $d_i$ in such
a way that $d_1\geq d_2 \geq \cdots \geq d_m$, we call $(d_1,\ldots,
d_m)$ the {\em multidegree} of
$V$. In this case, a stronger version of 
\eqref{eq: Bezout} holds, called the {\em B\'ezout theorem} (see,
e.g., \cite[Theorem 18.3]{Harris92}):
\begin{equation}\label{eq: Bezout theorem}
\deg V=d_1\cdots d_m.
\end{equation}
A complete intersection $V$ is called {\em normal} if it is {\em
	regular in codimension 1}, that is, the singular locus
$\mathrm{Sing}(V)$ of $V$ has codimension at least $2$ in $V$,
namely $\dim V-\dim \mathrm{Sing}(V)\ge 2$ (actually, normality is a
general notion that agrees on complete intersections with the one we
define here). A fundamental result for projective complete
intersections is the Hartshorne connectedness theorem (see, e.g.,
\cite[Theorem VI.4.2]{Kunz85}): if $V\subset\Pp^r$ is a complete
intersection defined over $\K$ and $W\subset V$ is any
$\K$--subvariety of codimension at least 2, then $V\setminus W$ is
connected in the Zariski topology of $\Pp^r$ over $\K$. Applying the
Hartshorne connectedness theorem with $W:=\mathrm{Sing}(V)$, one
deduces the following result.
\begin{theorem}\label{theorem: normal complete int implies irred}
	If $V\subset\Pp^r$ is a normal complete intersection, then $V$ is
	absolutely irreducible.
\end{theorem}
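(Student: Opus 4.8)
The plan is to verify the hypotheses of the Hartshorne connectedness theorem recalled just above and apply it with $W:=\mathrm{Sing}(V)$, working throughout over $\cfq$ (passing to $\cfq$ preserves both the complete intersection property and normality, since these are governed by a set of generators and by the geometric dimension of the singular locus, and the conclusion we want is absolute irreducibility anyway), so that the connectedness obtained will directly give that $V$ is absolutely irreducible.

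First I would record what the hypotheses supply. Since $V$ is a complete intersection in $\Pp^r$ it is equidimensional, as recalled above; set $n:=\dim V$. By the definition of normality, the singular locus $\mathrm{Sing}(V)$ is a closed $\cfq$--subvariety of $V$ with $\dim V-\dim\mathrm{Sing}(V)\ge 2$, i.e.\ of codimension at least $2$ in $V$. Hence the connectedness theorem applies with $W:=\mathrm{Sing}(V)$ and shows that the regular locus $U:=V\setminus\mathrm{Sing}(V)$ is connected in the Zariski topology over $\cfq$.

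Next I would relate $U$ to the absolutely irreducible components of $V$. Write $V=\mathcal{C}_1\cup\cdots\cup\mathcal{C}_s$ for the decomposition into absolutely irreducible components and put $U_i:=\mathcal{C}_i\cap U$. Each $U_i$ is closed in $U$ (because $\mathcal{C}_i$ is closed) and also open in $U$ (because $U_i=U\setminus\bigcup_{j\ne i}\mathcal{C}_j$ and $\bigcup_{j\ne i}\mathcal{C}_j$ is closed), hence clopen in $U$; it is nonempty, since equidimensionality gives $\dim\mathcal{C}_i=n>n-2\ge\dim\mathrm{Sing}(V)$ and therefore $\mathcal{C}_i\not\subseteq\mathrm{Sing}(V)$; and it is connected, being a nonempty (hence dense and irreducible) open subset of the irreducible variety $\mathcal{C}_i$. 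Finally the $U_i$ are pairwise disjoint, because a point $x$ lying on two distinct components $\mathcal{C}_i$ and $\mathcal{C}_j$ is singular: at such a point the local ring $\mathcal{O}_{V,x}$ is not an integral domain, hence not a regular local ring, so $x\in\mathrm{Sing}(V)$; thus $\mathcal{C}_i\cap\mathcal{C}_j\subseteq\mathrm{Sing}(V)$ and $U_i\cap U_j=\emptyset$. Consequently $U=U_1\sqcup\cdots\sqcup U_s$ is a partition of $U$ into nonempty clopen connected pieces, so $U_1,\ldots,U_s$ are exactly the connected components of $U$. Since $U$ is connected by the previous step, we get $s=1$, i.e.\ $V=\mathcal{C}_1$ is absolutely irreducible.

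The entire argument really rests on the Hartshorne connectedness theorem; granting that, the remaining steps are formal point--set topology. The one point I would state with care is the disjointness claim in the third step --- equivalently, that a point of $V$ lying on two distinct irreducible components is singular. I would justify it through the Jacobian criterion for regularity over the perfect field $\cfq$ together with the fact that a regular local ring is an integral domain (hence has a unique minimal prime); alternatively, one argues directly that at such a point the dimension of the tangent space strictly exceeds the local dimension of $V$.
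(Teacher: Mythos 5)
Your proposal is correct and follows exactly the route the paper intends: apply the Hartshorne connectedness theorem with $W:=\mathrm{Sing}(V)$, using normality to guarantee $\mathrm{codim}\,\mathrm{Sing}(V)\ge 2$, and then deduce irreducibility from the connectedness of the regular locus. The paper leaves the final deduction implicit, whereas you supply the standard (and correct) topological argument that the traces of the irreducible components on $V\setminus\mathrm{Sing}(V)$ are nonempty, connected, pairwise disjoint clopen sets, forcing a single component.
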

%
%
%
%
%
%
%
%
%
\section{Systems of diagonal equations} 
Let  $t,n, d_1,\ldots,d_t, k$ be positive integers such that $n \leq \frac{t-1}{2}$ ,  $1 \leq k \leq n$, $d_1>\cdots>d_t \geq 2$, and $\mathrm{char}(\fq)$ does  not divide $d_i$ for $1\leq i \leq t$. Let $X_1, \ldots, X_t$ be indeterminates over $\fq$ and let $g_1, \ldots,g_n \in \fq[X_1, \ldots,X_k]$ such that $g_j\in \fq$ for $1 \leq j \leq n$ or $0\leq \deg(g_j)<d_t$ for $1 \leq j \leq n$ and there exists $1\leq i\leq n$ such that $0<\deg(g_i)$. 

We consider the following system of $n$ deformed diagonal equations with $t$ unknowns
\begin{equation}\label{eq: system}\left \{\begin{array}{ccl}
a_{11}X_1^{d_1} & + a_{12}X_2^{d_2} + \cdots + & a_{1t}X_t^{d_t} = g_1(X_1, \ldots,X_k) \\
a_{21}X_1^{d_1} & + a_{22}X_2^{d_2} + \cdots + & a_{2t}X_t^{d_t} = g_2(X_1, \ldots,X_k) \\
\;\vdots &  &\quad\vdots \\
a_{n1}X_1^{d_1} &+ a_{n2}X_2^{d_2} + \cdots +& a_{nt}X_t^{d_t} = g_n(X_1, \ldots,X_k).
\end{array}\right.
\end{equation}
Let $A=[a_{ij}]\in \fq^{n \times t}$ be the coefficients' matrix of the above system.  Assume that $A$ satisfies the following hypothesis:
\smallskip

$(H)$ All $(n\times n)$--submatrix of $A$ has $\rank=n$.

Let $N$ denote the number of $\fq$--rational solutions of \eqref{eq: system}. The purpose of this paper is to give an estimate on the number $N$. 
 To do this, we consider the following polynomials $f_j\in \fq[X_1, \ldots,X_t]$
 \begin{equation*}\label{def: f'i}
 f_j:=a_{j1}X_1^{d_1}+a_{j2}X_2^{d_2}+\cdots+a_{jt}X_t^{d_t}-g_j(X_1,\ldots,X_k),\,\,1 \leq j \leq n.
 \end{equation*}
Without loss of generality we can assume that $\deg(g_j)>0$ for all $1\leq j\leq n$ or $g_j\in \fq$ for all $1\leq j\leq n$ .  Let $V:=V(f_1, \ldots,f_n)\subset \A^t$ be the $\fq$--affine variety defined by $f_1, \ldots,f_n$. We shall study some facts concerning the geometry of $V$. 
 
 Since $\rank(A)=n$, there exists an invertible matrix $M\in \fq^{n \times n}$  such that $M \cdot A=\hat{A}$ is in row echelon form, that is
\begin{equation} \label{eq:Aes}
\hat{A}:=
\left(
\begin{array}{ccccccc}
a_{1i_1} & \cdots &  & \cdots &  &\cdots &  a_{1t}
	\\
0 & a_{2i_2}  &&   &  &\cdots & a_{2t}\\
	\vdots & & \ddots  &   & \vdots &  & \vdots
	\\
0& \cdots &  0  & \cdots & a_{ni_n}& \cdots   & a_{nt}
	\end{array}
	\right),
	\end{equation}
with $1=i_1 < i_2 < \ldots<i_n\leq t$ are the indices of the corresponding  pivots.
 Let  $\hat{V}\subset \A^t$ the following $\fq$-- affine variety  
 $$\hat{V}:=\Bigg\{(x_1, \ldots,x_t)\in \A^t: \hat{A}\cdot \begin{pmatrix} x_1^{d_1} \\ \vdots\\x_t^{d_t}\end{pmatrix} = \begin{pmatrix} \hat{g}_1 \\ \vdots \\ \hat{g}_n\end{pmatrix}\Bigg\},$$ 
 where $ \begin{pmatrix}\hat{g}_1 \\ \vdots \\ \hat{g}_n\end{pmatrix}= M\cdot \begin{pmatrix} g_1 \\ \vdots \\ g_n\end{pmatrix}$,
namely $\hat{V}=V(\hat{f}_1, \ldots, \hat{f}_n)\subset \A^t$ is the  $\fq$--affine variety defined by $\hat{f}_j:=a_{ji_j}X_{i_j}^{d_{i_j}}+ \cdots +a_{jt}X_t^{d_{t}}-\hat{g}_j$, for $ 1 \leq j \leq n$.  It is clear that $V=\hat{V}^.$
\begin{claim}\label{claim: f sucesion regular}
 $\hat{f}_1, \ldots, \hat{f}_n$ form a regular sequence of $\fq[X_1, \cdots,X_t]$. Indeed, consider the graded lexicographic order  of $\fq[X_1, \cdots,X_t]$ with $X_1> \cdots > X_t$. With this order we have that $Lt(\hat{f}_j)=a_{ji_j}X_{i_j}^{d_{i_j}}$, where $Lt(\hat{f}_j)$ denotes the leading term of the polynomial $\hat{f}_j$. Thus, $Lt(\hat{f}_j)$ are relatively prime and then they form a Gröbner basis of the ideal $J$ generated by $\hat{f_j}$, $1\leq j \leq n$ (see, e.g., \cite[\S 2.9, Proposition 4]{CLO92}). Hence,
the initial of the ideal $J$ is generated by $Lt(\hat{f}_1),\ldots,Lt(\hat{f}_n)$, which form a regular sequence
of $\fq[X_1,\ldots,X_t]$. Therefore, by \cite[Proposition 15.15]{Eisenbud95}, the polynomials $\hat{f}_{1},\ldots,\hat{f}_{n}$ form
a regular sequence of $\fq[X_1,\ldots,X_t]$. We conclude that $V(\hat{f}_{1},\ldots,\hat{f}_{n})$ is a set complete intersection of $\A^t$ of pure dimension $t-n$.
\end{claim}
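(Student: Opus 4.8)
The plan is to reduce the claim to the elementary fact that powers of distinct variables form a regular sequence, by passing to leading terms with respect to a convenient monomial order. Fix the graded lexicographic order on $\fq[X_1,\ldots,X_t]$ with $X_1>\cdots>X_t$. The polynomial $\hat f_j=a_{ji_j}X_{i_j}^{d_{i_j}}+\cdots+a_{jt}X_t^{d_t}-\hat g_j$ involves only the pure powers $X_\ell^{d_\ell}$ with $\ell\ge i_j$ together with the monomials of $\hat g_j$; since $d_1>\cdots>d_t\ge 2$, among the former the one of strictly largest total degree is $X_{i_j}^{d_{i_j}}$, while $\hat g_j$ is either a constant or has total degree $<d_t\le d_{i_j}$, a bound stable under the $\fq$-linear combinations prescribed by the invertible matrix $M\in\fq^{n\times n}$. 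Hence the top-degree homogeneous part of $\hat f_j$ is the single monomial $a_{ji_j}X_{i_j}^{d_{i_j}}$, so $Lt(\hat f_j)=a_{ji_j}X_{i_j}^{d_{i_j}}$ with $a_{ji_j}\ne 0$, being a pivot entry of the row echelon matrix $\hat A$.

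Since $1=i_1<i_2<\cdots<i_n\le t$, the leading monomials $X_{i_1}^{d_{i_1}},\ldots,X_{i_n}^{d_{i_n}}$ are pairwise coprime, so by Buchberger's first criterion all $S$-polynomials among the $\hat f_j$ reduce to zero (see, e.g., \cite[\S 2.9, Proposition 4]{CLO92}); thus $\{\hat f_1,\ldots,\hat f_n\}$ is a Gröbner basis of $J:=(\hat f_1,\ldots,\hat f_n)$ and its initial ideal is $(X_{i_1}^{d_{i_1}},\ldots,X_{i_n}^{d_{i_n}})$. Powers of distinct variables form a regular sequence in $\fq[X_1,\ldots,X_t]$ (for instance because $V(X_{i_1}^{d_{i_1}},\ldots,X_{i_n}^{d_{i_n}})$ has dimension exactly $t-n$, so these $n$ elements are part of a homogeneous system of parameters of the Cohen-Macaulay ring $\fq[X_1,\ldots,X_t]$), hence so do the leading terms $Lt(\hat f_1),\ldots,Lt(\hat f_n)$. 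By \cite[Proposition 15.15]{Eisenbud95} it follows that $\hat f_1,\ldots,\hat f_n$ themselves form a regular sequence of $\fq[X_1,\ldots,X_t]$, and therefore $V=V(\hat f_1,\ldots,\hat f_n)\subset\A^t$ is a set-theoretic complete intersection, equidimensional of dimension $t-n$, in the sense recalled in Section~2.

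The only step requiring genuine care is the leading-term computation: one must check that no monomial coming from $\hat g_j$ can reach total degree $d_{i_j}$, which is exactly where the hypotheses $d_t\ge 2$ and $\deg g_j<d_t$ enter; everything else is a routine appeal to Buchberger's criterion together with Eisenbud's initial-ideal lemma. If one prefers to bypass Gröbner bases, essentially the same information follows by induction on $j$: the row echelon shape of $\hat A$ lets one successively eliminate $X_{i_1}^{d_{i_1}},\ldots,X_{i_{j-1}}^{d_{i_{j-1}}}$ so as to present $\fq[X_1,\ldots,X_t]/(\hat f_1,\ldots,\hat f_{j-1})$ as a free module over the polynomial ring in the non-pivot variables, after which $\hat f_j$ is not a zero divisor because its $X_{i_j}$-leading coefficient $a_{ji_j}$ is a unit and the degree bound on $\hat g_j$ prevents it from disturbing that pivot monomial; the Gröbner-basis route is, however, the shorter one.
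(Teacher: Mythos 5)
Your proposal is correct and follows essentially the same route as the paper: graded lex order, leading terms $a_{ji_j}X_{i_j}^{d_{i_j}}$ pairwise coprime, Buchberger's criterion via \cite[\S 2.9, Proposition 4]{CLO92}, and the passage from a regular sequence of initial terms to a regular sequence of the $\hat f_j$ via \cite[Proposition 15.15]{Eisenbud95}. Your added verification that the degree bound $\deg \hat g_j < d_t \le d_{i_j}$ forces the stated leading term is a detail the paper leaves implicit, but it is not a different argument.
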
 
Thus, we obtain the following result.
\begin{theorem}\label{teo: dimension of V}
 $V$ is a set-theoretic complete intersection of pure dimension $t-n$.
\end{theorem}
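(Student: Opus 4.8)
The plan is to reduce Theorem~\ref{teo: dimension of V} directly to Claim~\ref{claim: f sucesion regular}, using only the observation $V=\hat V$ that has already been recorded. First I would note that, since $M\in\fq^{n\times n}$ is invertible, the linear combinations $\hat f_1,\dots,\hat f_n$ of $f_1,\dots,f_n$ generate the same ideal in $\fq[X_1,\dots,X_t]$ as $f_1,\dots,f_n$ do; hence $V(f_1,\dots,f_n)=V(\hat f_1,\dots,\hat f_n)$ as $\fq$--affine varieties of $\A^t$, which is exactly the assertion $V=\hat V$. Thus it suffices to prove the statement for $\hat V$.

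Next I would invoke Claim~\ref{claim: f sucesion regular}: working with the graded lexicographic order on $\fq[X_1,\dots,X_t]$ with $X_1>\cdots>X_t$, the leading term of $\hat f_j$ is $a_{ji_j}X_{i_j}^{d_{i_j}}$ because $\hat A$ is in row echelon form (so $i_j$ is the pivot column of row $j$) and $d_{i_j}>d_\ell$ for all $\ell>i_j$, while the correction term $\hat g_j$ has degree $<d_t\le d_{i_j}$ (or is a constant). Since the indices $i_1<i_2<\cdots<i_n$ are distinct, the monomials $X_{i_1}^{d_{i_1}},\dots,X_{i_n}^{d_{i_n}}$ involve pairwise distinct variables and are therefore pairwise coprime; by \cite[\S 2.9, Proposition 4]{CLO92} the $\hat f_j$ form a Gröbner basis of the ideal $J$ they generate, so the initial ideal of $J$ is $(X_{i_1}^{d_{i_1}},\dots,X_{i_n}^{d_{i_n}})$. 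Monomials in pairwise distinct variables form a regular sequence, and by \cite[Proposition 15.15]{Eisenbud95} a sequence whose initial terms (in a given monomial order) form a regular sequence is itself a regular sequence; hence $\hat f_1,\dots,\hat f_n$ is a regular sequence in $\fq[X_1,\dots,X_t]$.

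Finally I would apply the definition of set--theoretic complete intersection recorded in Subsection~\ref{subsec: complete intersections}: a variety cut out in $\A^t$ by a regular sequence of $n$ polynomials is equidimensional of dimension $t-n$ and is a set--theoretic complete intersection. Applying this to $\hat f_1,\dots,\hat f_n$ gives that $\hat V$ is a set--theoretic complete intersection of pure dimension $t-n$, and since $V=\hat V$ the same holds for $V$, which is the claim.

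Honestly, there is essentially no obstacle here: the substance of the argument is already contained in Claim~\ref{claim: f sucesion regular}, and the theorem is just the packaging of that claim together with the trivial remark that an invertible $\fq$--linear change of the defining equations does not change the zero set. The only point requiring a line of care is the identity $V=\hat V$, i.e.\ that the new system $\hat A\cdot(X_1^{d_1},\dots,X_t^{d_t})^{\mathsf t}=(\hat g_1,\dots,\hat g_n)^{\mathsf t}$ has the same $\cfq$--solution set as the original one; this is immediate from $\hat A=M\cdot A$ and $(\hat g_1,\dots,\hat g_n)^{\mathsf t}=M\cdot(g_1,\dots,g_n)^{\mathsf t}$ with $M$ invertible over $\fq$.
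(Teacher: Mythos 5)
Your proposal is correct and follows essentially the same route as the paper: the paper's proof of this theorem is precisely the content of the Claim preceding it (leading terms $a_{ji_j}X_{i_j}^{d_{i_j}}$ under graded lex, pairwise coprime, hence a Gr\"obner basis, hence a regular sequence by \cite[Proposition 15.15]{Eisenbud95}), combined with the observation $V=\hat V$. Your added remark that an invertible $\fq$--linear change of the defining equations preserves both the generated ideal and the zero set is exactly the (tacit) justification the paper relies on for $V=\hat V$.
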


Let $C$ be the following set of $\A^t$:
\begin{equation}\label{def C} C:=\bigg\{{\bf{x}}\in V:\,\,\, \rank \bigg(\frac{\partial f}{\partial {\bf{X}}}\bigg)({\bf{x}}) <n\bigg\},\end{equation}
where the $(n\times t)$--matrix $\frac{\partial f}{\partial \bf{X}}$ is the jacobian matrix of the polynomials $f_j$, $1\leq j \leq n.$
Suppose that $g_j \in \fq$ or $  0<\deg(g_j) < d_t$, $1\leq j\leq n$ and $1 \leq n \leq \frac{t-1}{2}$. Assume that $A$, the coefficients' matrix of the system \eqref{eq: system}, satisfies the hypothesis $(H)$.
Observe that
\begin{equation*} \label{eq:matriz jacobiana factorizada case gral}
\frac{\partial f}{\partial \bf{X}}:= \left(
\begin{array}{c|c}
M_1 & M_2
\end{array}
\right)
\end{equation*}
where $M_1$ is a $(n \times k)$--matrix defined by
$$M_1:=\left(
\begin{array}{ccccccc}
a_{11}d_1X_1^{d_1-1}+\frac{\partial g_1}{\partial X_1} & \cdots &  & \cdots &   a_{1k}d_kX_k^{d_k-1}+ \frac{\partial g_1}{\partial X_k}
\\
\vdots &  \vdots  &   & \vdots &   \vdots
\\
a_{n1}d_1X_1^{d_1-1}+\frac{\partial g_n}{\partial X_1}& \cdots &    & \cdots &   a_{nk}d_kX_k^{d_k-1}+\frac{\partial g_n}{\partial X_k}
\end{array} \right)$$
and $M_2$ is a  $n \times(t-k)$--matrix defined by
$$M_2:=\left(
\begin{array}{ccccccc}
a_{1k+1}d_{k+1}X_{k+1}^{d_{k+1}-1}& \cdots &  & \cdots &   a_{1t}d_tX_t^{d_t-1}
\\
\vdots &  \vdots  &   & \vdots &   \vdots
\\
a_{n{k+1}}d_{k+1}X_{k+1}^{d_{k+1}-1}& \cdots &    & \cdots &   a_{nt}d_tX_t^{d_t-1}
\end{array} \right)$$

\begin{proposition} \label{prop: singular locus g no constante}  The dimension of $C$ is at most $k-1$ if $\deg(g_j)>0$ for $1\leq j \leq n$ and this dimension is at most $0$ if $g_i\in \fq$ for $1\leq i \leq n$. In particular, the dimension of the singular locus of $V$ is at most $k-1$ or $0$ respectively.
\end{proposition}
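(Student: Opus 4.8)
The plan is to analyze the rank condition defining $C$ by exploiting the block structure $\frac{\partial f}{\partial \bf{X}}=(M_1\,|\,M_2)$ together with hypothesis $(H)$ on $A$. First I would observe that, after the row transformation $M$ used to put $A$ in echelon form, studying $C$ on $V$ is equivalent to studying the analogous rank-drop locus $\hat C$ on $\hat V$; since $M$ is invertible over $\fq$, the Jacobian of $\hat f$ is $M\cdot\frac{\partial f}{\partial\bf X}$ and the maximal minors vanish simultaneously. So I may work with $\hat f_1,\dots,\hat f_n$, whose leading terms are $a_{ji_j}X_{i_j}^{d_{i_j}}$. The key point is that for a point ${\bf x}\in C$ to have Jacobian rank $<n$, all $n\times n$ minors of $(M_1\,|\,M_2)$ must vanish at ${\bf x}$; in particular, every $n\times n$ minor built from columns with indices $>k$ (i.e.\ from $M_2$) must vanish.

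Next I would exploit the structure of those $M_2$-minors. A size-$n$ minor of $M_2$ selecting columns $k< j_1<\dots<j_n\le t$ factors as $\big(\prod_{\ell} d_{j_\ell} X_{j_\ell}^{d_{j_\ell}-1}\big)\cdot\det\big(a_{i j_\ell}\big)_{1\le i\le n,\,1\le \ell\le n}$. By hypothesis $(H)$ the determinant $\det(a_{ij_\ell})$ is nonzero, and $\mathrm{char}(\fq)\nmid d_{j_\ell}$, so the vanishing of this minor forces $X_{j_\ell}=0$ for some $\ell$. Running over all $\binom{t-k}{n}$ choices of column sets, a combinatorial/pigeonhole argument shows that at most $n-1$ of the coordinates $X_{k+1},\dots,X_t$ can be nonzero at a point of $C$; equivalently, at least $t-k-(n-1)$ of these coordinates vanish. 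I would make this precise: if $n$ or more of them were nonzero, one could pick $n$ nonzero ones and obtain a nonvanishing $M_2$-minor, a contradiction. Thus $C$ is contained in the union, over all subsets $S\subset\{k+1,\dots,t\}$ with $|S|=t-k-n+1$, of the coordinate subspaces $\{X_j=0:j\in S\}$ intersected with $V$.

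Then I would bound the dimension on each such piece. Fixing such an $S$, on $\{X_j=0:j\in S\}$ the equations $\hat f_1,\dots,\hat f_n$ (or the original $f_j$) still involve the remaining $t-|S|=n+k-1$ variables, among which are $X_1,\dots,X_k$ and exactly $n-1$ of the variables $X_{k+1},\dots,X_t$. Here I would invoke the regular-sequence/leading-term argument of Claim 3.2 once more: restricted to this subspace, the polynomials $\hat f_j$ (suitably reordered, using that the pivots $i_1<\dots<i_n$ and the surviving high-index variables can be arranged so that $n$ of the $\hat f_j$ retain distinct pivot leading terms among the $n-1$ surviving high-index variables plus $X_1,\dots,X_k$) — more carefully, one checks that at most $k$ of the equations can become degenerate, so the variety cut out has dimension at most $(n+k-1)-n+(\text{number of lost equations})$. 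The cleanest route: on each coordinate subspace the system still imposes essentially $n$ independent conditions up to a loss controlled by $k$, giving $\dim\le k-1$. When every $g_j\in\fq$, the polynomials $f_j$ are genuinely diagonal, so the same argument forces all of $X_1,\dots,X_t$ outside a set of size $n-1$ to vanish, and the residual system $\sum a_{ji_j}x_{i_j}^{d_{i_j}}=\hat g_j$ in $\le n-1$ nonzero unknowns with the $n\times n$ nonsingularity from $(H)$ is $0$-dimensional; hence $\dim C\le 0$. Finally, since $V$ is a set-theoretic complete intersection of pure dimension $t-n$ (Theorem 3.3) and $C$ contains the singular locus of $V$ (the singular points are exactly where the Jacobian rank drops, cf.\ the complete-intersection criterion), the bound on $\dim C$ gives the stated bound on $\dim\mathrm{Sing}(V)$.

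The main obstacle I anticipate is the bookkeeping in the dimension count on the coordinate subspaces: one must verify that after setting $t-k-n+1$ of the high-index variables to zero, the remaining $n$ polynomials still behave like a regular sequence except for a controlled defect of size at most $k$ coming from the $g_j$'s (whose variables $X_1,\dots,X_k$ may make the leading-term argument fail for up to $k$ of the equations). Handling the interaction between the pivot positions $i_1<\dots<i_n$, the columns removed, and the low-index block $M_1$ carefully — so that one really does retain enough independent leading terms — is where the real work lies; the factorization of the $M_2$-minors and the pigeonhole step are comparatively routine.
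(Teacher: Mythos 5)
Your proposal is correct and follows essentially the same route as the paper: factor the $n\times n$ minors of $M_2$ as a nonsingular constant matrix (guaranteed by $(H)$ and $\mathrm{char}(\fq)\nmid d_i$) times a diagonal matrix of monomials to force at least $t-k-n+1$ of the coordinates $x_{k+1},\ldots,x_t$ to vanish, then set those variables to zero and observe that the residual system of $n$ equations in $k+n-1$ (resp.\ $n-1$) unknowns cuts out a variety of dimension $k-1$ (resp.\ $0$). Your hedge about ``at most $k$ equations becoming degenerate'' is unnecessary: by $(H)$ the restricted coefficient matrix still has rank $n$ and the leading-term argument of the Claim applies verbatim, so no equations are lost and the dimension count is exact.
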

\begin{proof}
Let ${\bf{x}}\in C$. We claim that ${\bf{x}}$ has at least $t-k-n+1$ coordinates equal to zero among $x_{k+1}$ and $x_t$. Indeed, if ${\bf{x}}$ has at most $t-k-n$ coordinates equal to zero among $x_{k+1}$ and $x_t$ then ${\bf{x}}$ has at least $n$ nonzero coordinates. Suppose that these coordinates are $x_{k+1},\ldots,x_{k+n}$. Then, we consider the following $(n\times n)$--submatrix of $M_2$:
$$M_{2,n}({\bf{x}})=\left(
\begin{array}{ccccccc}
a_{1k+1}d_{k+1}x_{k+1}^{d_{k+1}-1}& \cdots &  & \cdots &   a_{1{k+n}}d_{k+n}x_{k+n}^{d_{k+n}-1}
\\
\vdots &  \vdots  &   & \vdots &   \vdots
\\
a_{n{k+1}}d_{k+1}x_{k+1}^{d_{k+1}-1}& \cdots &    & \cdots &   a_{nk+n}d_{k+n}x_{k+n}^{d_{k+n}-1}
\end{array} \right).$$ We have that $M_{2,n}$ can be written as follows:
\begin{equation}\label{eq: M2n factorizada}
M_{2,n} ({\bf{x}})=
\left(
\begin{array}{ccccccc}
a_{1k+1}d_{k+1} & \cdots &  & \cdots &   a_{1k+n}d_{k+n}
	\\
	\vdots &  \vdots  &   & \vdots &   \vdots
	\\
a_{nk+1}d_{k+1}& \cdots &    & \cdots &   a_{nk+n}d_{k+n}
	\end{array}
	\right)\cdot	\left(
\begin{array}{ccccc}
x_{k+1}^{d_{k+1}-1} &  &    \cdots & 0 
	\\
\vdots & & \ddots&  \vdots  
\\	
0 &  & \cdots &       x_{k+n}^{d_{k+n}-1}
	\end{array}
	\right)
	\end{equation}

 From $(H)$ and the fact of $d_i\neq 0$ for all $1\leq i \leq t$, the determinant of 
 $$\left(
\begin{array}{ccccccc}
a_{1k+1}d_{k+1} & \cdots &  & \cdots &   a_{1k+n}d_{k+n}
	\\
	\vdots &  \vdots  &   & \vdots &   \vdots
	\\
a_{nk+1}d_{k+1}& \cdots &    & \cdots &   a_{nk+n}d_{k+n}
	\end{array}
	\right)$$
is nonzero. On the other hand, since $x_i\neq 0$ for $k+1\leq i \leq k+n$ we have that the determinant of the diagonal matrix of the right side of  \eqref{eq: M2n factorizada} is nonzero. Hence $M_2({\bf{x}})$ has rank $n$ and so $\frac{\partial f}{\partial \bf{X}}$ does.

 In order to estimate the dimension of $C$ it is enough to study the set of points of $C$ which have exactly $t-n-k+1$ coordinates equal to zero. We take a point ${\bf{x}}$ with this characteristic.  Without loss of generality, suppose that these coordinates are $x_{k+1},\ldots,x_{t-n+1}$. Now, we replace $ X_{k+1}=\cdots=X_{t-n+1}=0$ in \eqref{eq: system}  and we obtain a new system of $n$ equations and $k+n-1$ unknowns. From hypothesis $(H)$ and following the arguments of  the claim above, we deduce  that ${\bf{x}}$ belongs to a subvariety of $V$ of dimension $k+n-1-n= k-1$. We conclude that the dimension of $C$ is at most $k-1$.

On the other hand, if $g_j \in \fq$ for $1 \leq j \leq n$,  then with similar arguments, we obtain a system of $n$ equations and $n-1$ unknowns. From hypothesis $(H)$ we conclude that the dimension of the set of solution of the system is at most $0$.
\end{proof}
From Proposition \ref{prop: singular locus g no constante} and Theorem \ref{theorem: normal complete int implies irred}, we have the following result.
 \begin{corollary} \label{coro:radical ideal V} Let $k,n,t$ positive integers
 	such that $1\leq k \leq n$ and $A$ satisfies the hypothesis $(H)$.  If $g_j\in \fq$ for $1\leq j\leq n$ and $n\leq {t-2}$	or $\deg(g_j)\geq 0$ for $1\leq j \leq n$,  there exists $1 \leq i \leq n$ such that $0 < \deg(g_i)$ and $n\leq \frac{t-1}{2}$, then
 the singular locus of $V$ has codimension at least $2$ in $V$ and  $(f_1,\ldots,f_n)$ is a radical ideal.
 \end{corollary}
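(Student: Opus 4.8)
The plan is to deduce the statement from Theorem~\ref{teo: dimension of V}, Proposition~\ref{prop: singular locus g no constante} and the Jacobian criterion of Theorem~\ref{theorem: eisenbud 18.15}. By the reduction recorded earlier in this section, we may assume that either $g_j\in\fq$ for every $j$ or $\deg(g_j)>0$ for every $j$, so that Proposition~\ref{prop: singular locus g no constante} applies. I would also note at the outset that, since $M\in\fq^{n\times n}$ is invertible, the ideals $(f_1,\dots,f_n)$ and $(\hat f_1,\dots,\hat f_n)$ of $\fq[X_1,\dots,X_t]$ coincide, and that $\partial\hat f/\partial\bfs{X}=M\cdot(\partial f/\partial\bfs{X})$; hence $\rank(\partial\hat f/\partial\bfs{X})=\rank(\partial f/\partial\bfs{X})$ at every point, and the subvariety of $V=\hat V$ cut out by the maximal minors of $\partial\hat f/\partial\bfs{X}$ is exactly the set $C$ of \eqref{def C}.

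The first step is the codimension bound. By Theorem~\ref{teo: dimension of V}, $V$ has pure dimension $t-n$. If $\deg(g_j)>0$ for all $j$, Proposition~\ref{prop: singular locus g no constante} gives $\dim C\le k-1$, and since $k\le n$ and $n\le\frac{t-1}{2}$ (so $2n\le t-1$) we obtain
\[
\dim V-\dim C\ \ge\ (t-n)-(k-1)\ \ge\ t-2n+1\ \ge\ 2 .
\]
If $g_j\in\fq$ for all $j$, Proposition~\ref{prop: singular locus g no constante} gives $\dim C\le 0$, and the hypothesis $n\le t-2$ yields $\dim V-\dim C\ge t-n\ge 2$. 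In either case $C$ has codimension at least $2$, hence at least $1$, in $V$.

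For the radical ideal claim, I would invoke Theorem~\ref{theorem: eisenbud 18.15}: by Claim~\ref{claim: f sucesion regular}, $\hat f_1,\dots,\hat f_n$ form a regular sequence in $\fq[X_1,\dots,X_t]$, and by the previous step the subvariety of $V$ defined by the maximal minors of $\partial\hat f/\partial\bfs{X}$ has codimension at least $1$; therefore $(\hat f_1,\dots,\hat f_n)=(f_1,\dots,f_n)$ is radical, so $I(V)=(f_1,\dots,f_n)$ and $V$ is an ideal-theoretic complete intersection of dimension $t-n$. Finally, since $I(V)=(f_1,\dots,f_n)$ and $V$ has pure dimension $t-n$, for $\bfs{x}\in V$ one has $\dim\mathcal{T}_{\bfs{x}}V=t-\rank(\partial f/\partial\bfs{X})(\bfs{x})$, so $\bfs{x}$ is a regular point precisely when $\rank(\partial f/\partial\bfs{X})(\bfs{x})=n$; thus $\mathrm{Sing}(V)=C$ and the bounds of the first step give $\dim V-\dim\mathrm{Sing}(V)\ge 2$, i.e.\ $V$ is normal. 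I expect no serious obstacle here: the one point to get right is the elementary arithmetic that $k\le n$ together with $n\le\frac{t-1}{2}$ (resp.\ $n\le t-2$) forces the gap $\dim V-\dim C$ to be at least $2$, which is exactly what feeds both the codimension hypothesis of Theorem~\ref{theorem: eisenbud 18.15} and the normality conclusion.
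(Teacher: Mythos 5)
Your proof is correct and follows essentially the same route the paper intends: Proposition~\ref{prop: singular locus g no constante} together with the pure dimension $t-n$ from Theorem~\ref{teo: dimension of V} gives the codimension bound, and Theorem~\ref{theorem: eisenbud 18.15} then yields radicality. Your added care about $(f_1,\dots,f_n)=(\hat f_1,\dots,\hat f_n)$, the equality of Jacobian ranks, and the order of deductions (radicality first, then $\mathrm{Sing}(V)=C$) only makes explicit what the paper leaves implicit.
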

%
Then, we obtain the following result.
\begin{theorem}\label{teo: V es interseccion completa}
With the same hypotheses as in Corollary \ref{coro:radical ideal V}, $V=V(f_1 \klk f_n) \subset \A^t$ is a complete intersection of degree at most $ d_{i_1} \cdots d_{i_n}$, where $i_1,\ldots,i_n$ are the pivots of the matrix defined in \eqref{eq:Aes}.	
\end{theorem}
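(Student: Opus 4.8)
The plan is to assemble the statement from the structural facts about $V$ already in place. First I would record that, by Claim~\ref{claim: f sucesion regular} together with Theorem~\ref{teo: dimension of V}, the polynomials $f_1,\ldots,f_n$ (equivalently the $\hat f_1,\ldots,\hat f_n$, since the two families generate the same ideal of $\fq[X_1,\ldots,X_t]$) form a regular sequence and $V=V(f_1,\ldots,f_n)$ is equidimensional of dimension $t-n$, hence of codimension $n$ in $\A^t$. Next, Corollary~\ref{coro:radical ideal V} guarantees, under the hypotheses inherited here, that the ideal $(f_1,\ldots,f_n)$ is radical; since $\fq$ is perfect this radicality is preserved over $\cfq$, so by the Nullstellensatz $I(V)=(f_1,\ldots,f_n)$. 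Being the defining ideal of a codimension-$n$ variety and generated by $n$ polynomials, $V$ is therefore an (ideal-theoretic) complete intersection of $\A^t$. (Alternatively one may invoke directly the "in particular" clause of Theorem~\ref{theorem: eisenbud 18.15}, since the regular-sequence property and the codimension bound on the Jacobian-rank-deficient locus $C$ furnished by Proposition~\ref{prop: singular locus g no constante} are exactly its hypotheses.)

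For the degree bound I would work with the row-echelon presentation $V=\hat V=V(\hat f_1,\ldots,\hat f_n)$ coming from \eqref{eq:Aes}, where $\hat f_j=a_{ji_j}X_{i_j}^{d_{i_j}}+\cdots+a_{jt}X_t^{d_t}-\hat g_j$ and each $\hat g_j$ is an $\fq$-linear combination of $g_1,\ldots,g_n$. The elementary observation driving the estimate is that
$$\deg \hat g_j\ \le\ \max_{1\le i\le n}\deg g_i\ <\ d_t\ \le\ d_{i_j},$$
the last inequality holding because $i_j\le t$ and $d_1>\cdots>d_t$; hence $\hat f_j$ is a nonzero polynomial of total degree exactly $d_{i_j}$, and the hypersurface $V(\hat f_j)$ has degree at most $d_{i_j}$. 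Writing $V=V(\hat f_1)\cap\cdots\cap V(\hat f_n)$ and applying the B\'ezout inequality \eqref{eq: Bezout} successively $n-1$ times then yields
$$\deg V\ \le\ \prod_{j=1}^{n}\deg V(\hat f_j)\ \le\ \prod_{j=1}^{n} d_{i_j}\ =\ d_{i_1}\cdots d_{i_n}.$$

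Since essentially all of the geometric content — the regular-sequence property, the pure dimension $t-n$, and above all the codimension-$\ge 2$ bound on $\mathrm{Sing}(V)$ that produces radicality — has already been established, there is no serious obstacle in this theorem: it is mostly a matter of packaging. The only points deserving a line of care are (i) that radicality of $(f_1,\ldots,f_n)$ over $\fq$ does give $I(V)=(f_1,\ldots,f_n)$ over $\cfq$ as well, so that "complete intersection" is meant in the ideal-theoretic sense used in \S\ref{subsec: complete intersections}, and (ii) the degree bookkeeping $\deg g_j<d_t\le d_{i_j}$ ensuring each $\hat f_j$ has the expected degree $d_{i_j}$ before invoking B\'ezout.
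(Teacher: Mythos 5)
Your proposal is correct and matches the argument the paper intends (the paper states this theorem without an explicit proof, as an immediate assembly of Claim on the regular sequence, Theorem \ref{teo: dimension of V}, Corollary \ref{coro:radical ideal V}, and the B\'ezout inequality \eqref{eq: Bezout} applied to the hypersurfaces $V(\hat f_j)$ of degrees $d_{i_j}$). Your two points of care --- radicality over $\cfq$ giving $I(V)=(f_1,\ldots,f_n)$, and the degree bookkeeping $\deg \hat g_j < d_t \le d_{i_j}$ --- are exactly the right details to check, so nothing is missing.
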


\subsection{The geometry of the projective closure}\label{geo proyectiva}
Consider the embedding of $\A^t$ into the projective space $\Pp^t$
which assigns to any $\bfs{x}:=(x_1,\ldots, x_t)\in\A^t$ the point
$(1:x_1:\dots:x_t)\in\Pp^t$. Then the closure
$\mathrm{pcl}(V)\subset\Pp^t$ of the image of $V$ under this
embedding in the Zariski topology of $\Pp^t$ is called the
projective closure of $V$. The points of $\mathrm{pcl}(V)$ lying
in the hyperplane $\{X_0=0\}$ are called the points of
$\mathrm{pcl}(V)$ at infinity.

It is well--known that $\mathrm{pcl} (V)$ is the $\fq$--variety of
$\mathbb{P}^t$ defined by the homogenization
$F^h\in\fq[X_0,\ldots,X_t]$ of each polynomial $F$ belonging to the
ideal $(f_1\klk f_n)\subset\fq[X_1,\ldots,X_t]$ (see, e.g.,
\cite[\S I.5, Exercise 6]{Kunz85}). Denote by $(f_1\klk f_t)^h$
the ideal generated by all the polynomials $F^h$ with $F\in
(f_1\klk f_n)$. Since $(f_1\klk f_n)$ is radical it turns
out that $(f_1\klk f_n)^h$ is also a radical ideal (see, e.g.,
\cite[\S I.5, Exercise 6]{Kunz85}). Furthermore, $\mathrm{pcl}
(V)$ has pure dimension $t-n$ (see, e.g., \cite[Propositions
I.5.17 and II.4.1]{Kunz85}) and degree equal to $\deg V$ (see, e.g.,
\cite[Proposition 1.11]{CaGaHe91}).

Now we discuss the behaviour of $\mathrm{pcl} (V)$ at infinity. Recall that $V=V(\hat{f}_1, \klk, \hat{f}_n) \subset \A^n$, where $\hat{f}_j:=a_{ji_j}X_{i_j}^{d_{i_j}}+ \cdots + a_{jt}X_t^{d_{t}}-\hat{g}_j$, for $ 1 \leq j \leq n$, where $1=i_1 < i_2 < \ldots<i_n\leq t$, $\hat{g}_j \in \fq[X_1,\ldots,X_k]$, $1\leq k\leq n$ and $0 \leq \deg(\hat{g}_j) <d_t$.
Hence, the homogenization of each $\hat{f}_j$ is the following polynomial of $\fq[X_0 \klk X_t]:$ $$\hat{f}_j^{ h}:=a_{ji_j}X_{i_j}^{d_{i_j}}+ X_0 \cdot h_j, \, \,  ( 1 \leq j \leq n)$$
where $h_j \in \fq[X_1 \klk X_t],$ $\deg(h_j)<d_{i_j}$, $ 1 \leq j \leq n$.

In particular, it follows that $\hat{f}_j^{h}(0,X_1 \klk X_t)=a_{ji_j}X_{ij}^{d_{ij}}$ for $1 \leq j \leq n$.

\begin{proposition} \label{prop: dimension of pcl en el infinito}
	$V^{\infty}:=\mathrm{pcl}(V) \cap \{X_0=0\} \subset \Pp^{t-1}$ is  a non-singular linear complete intersection of pure dimension $t-n-1$.
\end{proposition}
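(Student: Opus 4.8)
The plan is to exploit the explicit description of the homogenized defining polynomials at infinity, namely that $\hat{f}_j^{\,h}(0,X_1,\ldots,X_t)=a_{ji_j}X_{i_j}^{d_{i_j}}$ for $1\le j\le n$. First I would observe that a point $(0:x_1:\cdots:x_t)\in\Pp^t$ lies in $\mathrm{pcl}(V)\cap\{X_0=0\}$ if and only if it annihilates every homogenization $F^h$ with $F\in(\hat f_1,\ldots,\hat f_n)$; in particular it must annihilate $\hat f_j^{\,h}(0,X_1,\ldots,X_t)=a_{ji_j}X_{i_j}^{d_{i_j}}$, and since $a_{ji_j}\ne 0$ (it is a pivot) and $\mathrm{char}(\fq)\nmid d_{i_j}$ is irrelevant here, this forces $x_{i_j}=0$ for each $j=1,\ldots,n$. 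Conversely, a point with $x_{i_1}=\cdots=x_{i_n}=0$ lies on $V^{\infty}$: one shows that $(\hat f_1,\ldots,\hat f_n)^h$ is generated by $\hat f_1^{\,h},\ldots,\hat f_n^{\,h}$ together with possibly lower-order homogenizations, but because each $\hat f_j$ has leading form (in the grading by degree) equal to $a_{ji_j}X_{i_j}^{d_{i_j}}+\cdots$, the part of any element of the ideal that survives on $\{X_0=0\}$ lies in the ideal generated by the $X_{i_j}^{d_{i_j}}$ — I would make this precise using that the $\hat f_j$ form a regular sequence (Claim 3.3) and that the leading forms $a_{ji_j}X_{i_j}^{d_{i_j}}$ already form a regular sequence, so the homogenization ideal restricted to $X_0=0$ is exactly $(X_{i_1}^{d_{i_1}},\ldots,X_{i_n}^{d_{i_n}})$, whose radical (and $V^{\infty}$ is cut out by the radical) is $(X_{i_1},\ldots,X_{i_n})$.

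Hence $V^{\infty}$ is, set-theoretically and scheme-theoretically on the reduced level, the linear subspace $\{X_0=X_{i_1}=\cdots=X_{i_n}=0\}\subset\Pp^t$, which sits inside $\Pp^{t-1}=\{X_0=0\}$ as a linear subspace of dimension $(t-1)-n=t-n-1$. Being a linear space, it is defined by $n$ linear forms $X_{i_1},\ldots,X_{i_n}$ which obviously form a regular sequence in $\fq[X_1,\ldots,X_t]$ (they are part of a coordinate system), so $V^{\infty}$ is a complete intersection of dimension $t-n-1$; and a linear subspace of projective space is nonsingular, since its Jacobian matrix has constant rank $n$ everywhere. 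That establishes all three assertions: linear, complete intersection, nonsingular, of pure dimension $t-n-1$.

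The main obstacle, and the only nontrivial point, is the passage from "$V^{\infty}$ is contained in $V(X_{i_1},\ldots,X_{i_n})$" to equality — i.e. controlling the full homogenization ideal $(\hat f_1,\ldots,\hat f_n)^h$ rather than just the three homogenized generators, since in general $(\hat f_1,\ldots,\hat f_n)^h\supsetneq(\hat f_1^{\,h},\ldots,\hat f_n^{\,h})$ and extra generators could carve out a smaller variety at infinity. I would handle this by a Gröbner-basis / leading-term argument parallel to Claim 3.3: with respect to a suitable monomial order the leading terms of $\hat f_1,\ldots,\hat f_n$ are the pairwise-coprime monomials $a_{ji_j}X_{i_j}^{d_{i_j}}$, so they form a Gröbner basis, and consequently their homogenizations $\hat f_1^{\,h},\ldots,\hat f_n^{\,h}$ form a Gröbner basis of $(\hat f_1,\ldots,\hat f_n)^h$ with respect to the extended order (see \cite[\S2.9, Proposition 4]{CLO92} together with the standard fact that homogenizing a Gröbner basis yields a Gröbner basis of the homogenized ideal). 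Therefore $(\hat f_1,\ldots,\hat f_n)^h=(\hat f_1^{\,h},\ldots,\hat f_n^{\,h})$, and setting $X_0=0$ gives exactly $(X_{i_1}^{d_{i_1}},\ldots,X_{i_n}^{d_{i_n}})$, whose zero locus in $\Pp^{t-1}$ is the linear space $\{X_{i_1}=\cdots=X_{i_n}=0\}$, as claimed.
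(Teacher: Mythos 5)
Your proposal is correct, but for the one nontrivial step --- upgrading the containment $V^{\infty}\subset V(X_{i_1},\ldots,X_{i_n})$ to an equality --- you take a genuinely different route from the paper. The paper argues purely dimension-theoretically: since $\mathrm{pcl}(V)$ has pure dimension $t-n$ and no component of it lies in the hyperplane at infinity, every irreducible component of $\mathrm{pcl}(V)\cap\{X_0=0\}$ has dimension at least $t-n-1$ by the projective dimension theorem; as this intersection sits inside the irreducible $(t-n-1)$--dimensional linear space $V(X_{i_1},\ldots,X_{i_n})$, equality is forced. You instead compute the ideal at infinity exactly: using that the leading terms $a_{ji_j}X_{i_j}^{d_{i_j}}$ with respect to the graded lexicographic order are pairwise coprime (as in Claim 3.3), the $\hat f_j$ form a Gr\"obner basis, and homogenizing a Gr\"obner basis with respect to a degree-compatible order yields generators of the homogenized ideal, so $(\hat f_1,\ldots,\hat f_n)^h=(\hat f_1^{\,h},\ldots,\hat f_n^{\,h})$ and setting $X_0=0$ gives $(X_{i_1}^{d_{i_1}},\ldots,X_{i_n}^{d_{i_n}})$, whose zero locus is the linear space. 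Your argument is heavier but yields strictly more --- an explicit generating set of $(\hat f_1,\ldots,\hat f_n)^h$, which the paper only recovers later (in Proposition 3.14) by an indirect normality-plus-irreducibility argument --- while the paper's dimension count is shorter and avoids the Gr\"obner-basis homogenization theorem. Two trivial slips in your write-up: ``the three homogenized generators'' should read ``the $n$ homogenized generators,'' and you should state explicitly that the relevant monomial order is graded, since the homogenization-of-Gr\"obner-bases fact fails for non-degree-compatible orders.
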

\begin{proof}
Recall that the projective variety $\mathrm{pcl}(V)$ has pure
dimension $t-n$. Hence, each irreducible component of
$\mathrm{pcl}(V)\cap \{X_0=0\}$ has dimension at least $t-n-1$.
On the other hand, from the definition of $\hat{f}_j^{h}$, $1\leq j \leq n$, we deduce that $ \mathrm{pcl}(V) \cap \{X_0=0\}\subset V(X_{i_1},\ldots,X_{i_n})$. Since $V(X_{i_1},\ldots,X_{i_n})$ is a nonsingular irreducible variety of $\Pp^{t-1}$ of pure dimension $t-n-1$ we obtain that $\mathrm{pcl}(V) \cap \{X_0=0\}=V(X_{i_1},\ldots,X_{i_n})$ and therefore,  the proposition follows.	
\end{proof} 
\begin{corollary} \label{corolario: puntos sing en el infinito} $\mathrm{pcl}(V)$ has not singular points at infinity.
\begin{proof}
From \cite[Lemma 1.1]{GhLa02a} we have that the set of singular points of $\mathrm{pcl}(V)$ lying in $\{X_0=0\}$ is contained in the set of singular points of the variety $\mathrm{pcl}(V) \cap \{X_0=0\}$. Then, taking into account the above proposition we have that $\mathrm{pcl}(V)$ has not singular points at infinity.
\end{proof} 
\end{corollary}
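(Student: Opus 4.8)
The plan is to obtain this as an immediate consequence of Proposition~\ref{prop: dimension of pcl en el infinito} together with the standard comparison between the singular locus of a projective variety and that of a proper hyperplane section. The guiding observation is that a point of $\mathrm{pcl}(V)$ which lies in $\{X_0=0\}$ and is singular on $\mathrm{pcl}(V)$ would necessarily be singular on the slice $\mathrm{pcl}(V)\cap\{X_0=0\}$ as well; but the previous proposition identifies this slice with the linear space $V(X_{i_1},\ldots,X_{i_n})\subset\Pp^{t-1}$, which is nonsingular, so no such point can exist.

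Concretely, I would proceed as follows. First, recall that $\mathrm{pcl}(V)$ has pure dimension $t-n$, while by Proposition~\ref{prop: dimension of pcl en el infinito} the variety $V^{\infty}:=\mathrm{pcl}(V)\cap\{X_0=0\}$ has pure dimension $t-n-1$; in particular the hyperplane $\{X_0=0\}$ contains no irreducible component of $\mathrm{pcl}(V)$, so it meets $\mathrm{pcl}(V)$ properly. This is exactly the hypothesis under which \cite[Lemma 1.1]{GhLa02a} applies, and it yields the inclusion
\[
\mathrm{Sing}\bigl(\mathrm{pcl}(V)\bigr)\cap\{X_0=0\}\ \subseteq\ \mathrm{Sing}\bigl(V^{\infty}\bigr).
\]
Since Proposition~\ref{prop: dimension of pcl en el infinito} guarantees that $V^{\infty}$ is a nonsingular (linear) complete intersection, the right-hand side is empty, and hence $\mathrm{pcl}(V)$ has no singular point in $\{X_0=0\}$, i.e.\ no singular point at infinity.

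The only delicate point is checking the hypothesis of \cite[Lemma 1.1]{GhLa02a}, namely that $\{X_0=0\}$ is a proper hyperplane section of $\mathrm{pcl}(V)$; as noted above this follows at once from the dimension count, so I do not anticipate real difficulty. I would deliberately route the argument through this lemma rather than attempt a direct Jacobian computation at the points at infinity: at a point $(0:p_1:\cdots:p_t)\in V^{\infty}$ one has $p_{i_1}=\cdots=p_{i_n}=0$, so the ``diagonal'' terms $a_{ji_j}d_{i_j}X_{i_j}^{d_{i_j}-1}$ occurring in the partial derivatives of the homogenized polynomials $\hat f_j^{\,h}=a_{ji_j}X_{i_j}^{d_{i_j}}+X_0h_j$ all vanish there, and squeezing maximal rank out of the remaining lower-order contributions $X_0h_j$ by hand would be considerably more awkward than invoking the hyperplane-section lemma.
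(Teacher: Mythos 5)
Your argument is exactly the paper's: invoke \cite[Lemma 1.1]{GhLa02a} to place the singular points of $\mathrm{pcl}(V)$ at infinity inside $\mathrm{Sing}(V^{\infty})$, then use Proposition~\ref{prop: dimension of pcl en el infinito} to conclude that this set is empty. Your additional verification that $\{X_0=0\}$ meets $\mathrm{pcl}(V)$ properly (via the dimension count) is a worthwhile check that the paper leaves implicit.
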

From  Proposition \ref{prop: singular locus g no constante} and Corollary \ref{corolario: puntos sing en el infinito}, we obtain the following result.
\begin{proposition}\label{prop: singular locus pcl V}
If  $ n \leq t-2$ and  $g_j \in \fq $ for $1\leq j\leq n$  the singular locus of $\mathrm{pcl}(V) \subset \Pp^t$ has dimension at most $0$. On the other hand, let  $1\leq k \leq n$ and $ n \leq \frac{t-1}{2}$.   If $0\leq \deg(g_j)$  and there exists $g_i$  such that $\deg(g_i)>0$ for $1\leq i \leq n$, the singular locus of $\mathrm{pcl}(V)$ has  dimension at most $k-1$.
\end{proposition}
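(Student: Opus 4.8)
The plan is to split the singular locus $\mathrm{Sing}(\mathrm{pcl}(V))$ into its affine part, lying in the chart $\{X_0\neq 0\}$, and its part at infinity, lying in $\{X_0=0\}$, and to bound each separately using results already at hand.

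First I would use that, under the embedding $\A^t\hookrightarrow\Pp^t$, $\mathbf{x}\mapsto(1:\mathbf{x})$, the chart $\{X_0\neq 0\}$ is isomorphic to $\A^t$ and carries $\mathrm{pcl}(V)\cap\{X_0\neq 0\}$ onto $V$. Since being a regular point is a local property preserved by this isomorphism, the singular points of $\mathrm{pcl}(V)$ in this chart are precisely the (images of the) singular points of the affine variety $V$; hence the affine part of $\mathrm{Sing}(\mathrm{pcl}(V))$ has the same dimension as $\mathrm{Sing}(V)$. By Proposition \ref{prop: singular locus g no constante} --- recalling the reduction, made earlier, to the case where either $g_j\in\fq$ for all $j$ or $\deg(g_j)>0$ for all $j$ --- this dimension is at most $0$ in the former case and at most $k-1$ in the latter.

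Next I would dispose of the part at infinity: by Corollary \ref{corolario: puntos sing en el infinito}, $\mathrm{pcl}(V)$ has no singular points on $\{X_0=0\}$ at all, so this piece is empty. Combining the two contributions yields $\dim\mathrm{Sing}(\mathrm{pcl}(V))\le 0$ when all $g_j$ are constant and $\dim\mathrm{Sing}(\mathrm{pcl}(V))\le k-1$ otherwise, which is the assertion.

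I do not expect a real obstacle here: the substantive input is already proved, namely the affine estimate (Proposition \ref{prop: singular locus g no constante}) and the analysis at infinity (Proposition \ref{prop: dimension of pcl en el infinito} and Corollary \ref{corolario: puntos sing en el infinito}). The only point deserving explicit mention is the standard fact that forming the projective closure neither creates nor removes singularities inside the affine chart --- which should be recorded with the same references used above for projective closures (e.g. \cite{Kunz85}, \cite{Shafarevich94}) --- together with the observation that the hypotheses of Corollary \ref{coro:radical ideal V} hold in both cases, so that $\mathrm{pcl}(V)$ is a complete intersection for which the intrinsic notion of singular locus applies.
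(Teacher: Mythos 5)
Your proof is correct and follows exactly the route the paper intends: the paper derives this proposition directly by combining Proposition \ref{prop: singular locus g no constante} (the affine singular locus bound) with Corollary \ref{corolario: puntos sing en el infinito} (no singular points at infinity), which is precisely your decomposition into the chart $\{X_0\neq 0\}$ and the hyperplane at infinity. Your additional remarks on the chart isomorphism preserving regularity and on the radicality of the ideals are the standard justifications the paper leaves implicit.
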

We conclude this section with a statement that summarizes all the
facts we need concerning the geometry of the projective closure
$\mathrm{pcl}(V)$.
\begin{proposition}\label{prop: pcl V interseccion completa}  With the same hypotheses as above,
	$\mathrm{pcl}(V) \subset \Pp^t$ is a complete intersection of dimension $t-n$ and degree $d_{i_1}\cdots d_{i_n}$.
\end{proposition}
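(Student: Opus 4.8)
The plan is to realize $\mathrm{pcl}(V)$ as the zero set in $\Pp^t$ of the homogenized generators $\hat f_1^h,\ldots,\hat f_n^h$ and to verify that these $n$ polynomials generate the (radical) homogeneous ideal of $\mathrm{pcl}(V)$; since $\deg\hat f_j^h=d_{i_j}$, the asserted degree will then follow from the B\'ezout theorem \eqref{eq: Bezout theorem}, the dimension $t-n$ being already known.

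First I would show that $\hat f_1^h,\ldots,\hat f_n^h$ form a regular sequence in $\fq[X_0,\ldots,X_t]$, arguing exactly as in Claim \ref{claim: f sucesion regular}. Consider a monomial order on $\fq[X_0,\ldots,X_t]$ that refines the grading $\deg X_0=0$, $\deg X_i=1$ for $i\ge 1$ (breaking ties, say, lexicographically with $X_1>\cdots>X_t$). Since $\hat f_j^h(0,X_1,\ldots,X_t)=a_{ji_j}X_{i_j}^{d_{i_j}}$, the monomial $a_{ji_j}X_{i_j}^{d_{i_j}}$ is the unique term of $\hat f_j^h$ of $X_0$-exponent $0$, hence the term of maximal degree in this grading, so $\mathrm{Lt}(\hat f_j^h)=a_{ji_j}X_{i_j}^{d_{i_j}}$. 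As $i_1,\ldots,i_n$ are pairwise distinct, these leading terms are pairwise coprime; therefore $\{\hat f_1^h,\ldots,\hat f_n^h\}$ is a Gr\"obner basis of the ideal $J:=(\hat f_1^h,\ldots,\hat f_n^h)$ (see, e.g., \cite[\S 2.9, Proposition 4]{CLO92}), the initial ideal of $J$ is $(X_{i_1}^{d_{i_1}},\ldots,X_{i_n}^{d_{i_n}})$ — generated by a regular sequence of monomials — and by \cite[Proposition 15.15]{Eisenbud95} the polynomials $\hat f_1^h,\ldots,\hat f_n^h$ form a regular sequence. Consequently $W:=V(\hat f_1^h,\ldots,\hat f_n^h)\subset\Pp^t$ is a set-theoretic complete intersection of pure dimension $t-n$. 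By Proposition \ref{prop: dimension of pcl en el infinito} we have $W\cap\{X_0=0\}=V(X_{i_1},\ldots,X_{i_n})$, of dimension $t-n-1<\dim W$, so no irreducible component of $W$ is contained in $\{X_0=0\}$; since the affine part $W\cap\{X_0\neq 0\}$ coincides with $V$ (dehomogenizing the $\hat f_j^h$ recovers the $\hat f_j$), every component of $W$ lies in $\overline V$, whence $W=\mathrm{pcl}(V)$ and in particular $\dim\mathrm{pcl}(V)=t-n$.

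It remains to prove that $J$ is radical, so that $J=I(\mathrm{pcl}(V))$ and $\mathrm{pcl}(V)$ is an ideal-theoretic complete intersection. Because $\mathrm{pcl}(V)=V(J)$ is a complete intersection, $\fq[X_0,\ldots,X_t]/J$ is Cohen--Macaulay and hence unmixed; as $X_0$ lies in none of its minimal (equivalently, associated) primes by the previous paragraph, $X_0$ is a nonzerodivisor modulo $J$. Clearing powers of $X_0$ (see \cite[\S I.5, Exercise 6]{Kunz85}) then gives $(\hat f_1,\ldots,\hat f_n)^h=J$; and since $(\hat f_1,\ldots,\hat f_n)=(f_1,\ldots,f_n)$ is radical by Corollary \ref{coro:radical ideal V}, its homogenization is radical and equals $I(\mathrm{pcl}(V))$. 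Thus $\mathrm{pcl}(V)$ is cut out by $\hat f_1^h,\ldots,\hat f_n^h$, with $\deg\hat f_j^h=d_{i_j}$, and the B\'ezout theorem \eqref{eq: Bezout theorem} yields $\deg\mathrm{pcl}(V)=d_{i_1}\cdots d_{i_n}$. (Alternatively, that $J$ is radical can be obtained from Theorem \ref{theorem: eisenbud 18.15} applied to $\hat f_1^h,\ldots,\hat f_n^h$ over $\A^{t+1}$, since the affine cone over $\mathrm{pcl}(V)$ has singular locus of codimension at least $1$ by Proposition \ref{prop: singular locus pcl V}.) The one genuinely delicate point is this passage from a set-theoretic to an ideal-theoretic description of $\mathrm{pcl}(V)$; its essential geometric input — that $\mathrm{pcl}(V)$ is small at infinity — is already secured in Proposition \ref{prop: dimension of pcl en el infinito}, so the remainder is assembly of results in hand.
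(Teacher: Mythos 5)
Your proof is correct, but it reaches the conclusion by a genuinely different route than the paper. The paper's proof never establishes that $\hat f_1^h,\ldots,\hat f_n^h$ form a regular sequence; instead it bounds $\dim V(\hat f_1^h,\ldots,\hat f_n^h)$ by splitting into the affine part and the part at infinity, then invokes Proposition \ref{prop: singular locus g no constante} together with the nonsingularity of $V(X_{i_1},\ldots,X_{i_n})$ to get a singular locus of codimension at least $2$, asserts radicality of $(\hat f_1^h,\ldots,\hat f_n^h)$ by transfer from the affine ideal, and concludes via normality and Theorem \ref{theorem: normal complete int implies irred} that $V(\hat f_1^h,\ldots,\hat f_n^h)$ is absolutely irreducible, whence it equals $\mathrm{pcl}(V)$. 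You bypass irreducibility entirely: the Gr\"obner-basis argument on the homogenized generators (parallel to the paper's affine Claim) gives pure dimension $t-n$ directly, and the observation that $W\cap\{X_0=0\}$ has dimension $t-n-1$ already forces $W=\mathrm{pcl}(V)$ without any normality input. Your treatment of radicality is in fact more careful than the paper's: the ideal generated by the homogenizations of the $\hat f_j$ is a priori only contained in the homogenization $(\hat f_1,\ldots,\hat f_n)^h$ of the ideal, and your unmixedness/saturation argument (using that $X_0$ avoids all associated primes of the Cohen--Macaulay quotient) is exactly what closes that gap, which the paper elides in one line. The trade-off is that the paper's route additionally delivers absolute irreducibility of $\mathrm{pcl}(V)$, a fact it implicitly relies on when applying the Ghorpade--Lachaud estimate \eqref{eq: estimacion de Ghorpade Lachaud} to an \emph{irreducible} complete intersection; your version would need to append that observation (it follows from your conclusions plus Theorem \ref{theorem: normal complete int implies irred}) before the estimates of the next subsection can be quoted.
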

\begin{proof}
Observe that the following inclusions hold:
\begin{align*}
V(\hat{f_1}^h,\ldots,\hat{f_n}^h) \cap &\{X_0\neq 0\} \subset V(\hat{f_1},\ldots,\hat{f_n}),\\
V(\hat{f_1}^h,\ldots,\hat{f_n}^h) \cap &\{X_0=0\}\subset V(X_{i_1},\ldots,X_{i_n}).
\end{align*}
From Theorem \ref{teo: dimension of V} we have that $V(\hat{f_1},\ldots,\hat{f_n})\subset \A^t$ has dimension $t-n$. It is easy to see that $V(X_{i_1},\ldots,X_{i_n})\subset \Pp^{t}$ has pure dimension $t-n-1$; hence the dimension of $V(\hat{f_1}^h,\ldots,\hat{f_n}^h)\subset \Pp^t$ is at most $t-n$. On the other hand, since $\mathrm{pcl}(V)\subset V(\hat{f_1}^h,\ldots,\hat{f_n}^h)$ is $(t-n)$-dimensional we conclude that $V(\hat{f_1}^h,\ldots,\hat{f_n}^h)$ has dimension $t-n$.

 From Proposition \ref{prop: singular locus g no constante} and taking into account the variety $V(X_{i_1} \klk X_{i_n})$ is non--singular we have  that the codimension of the singular locus of $V(\hat{f_1}^h,\ldots,\hat{f_n}^h)$ is at least $2$. On the other hand $(\hat{f_1}^h,\ldots,\hat{f_n}^h)$ is a radical ideal since  $(\hat{f_1},\ldots,\hat{f_n})$ is one. We conclude that  $V(\hat{f_1}^h,\ldots,\hat{f_n}^h)$ is a normal complete intersection. Hence, from Theorem \ref{theorem: normal complete int implies irred} $V(\hat{f_1}^h,\ldots,\hat{f_n}^h)$ is absolutely irreducible and thus $\mathrm{pcl}(V)=V(\hat{f_1}^h,\ldots,\hat{f_n}^h)$. Finally, from \eqref{eq: Bezout theorem} $\mathrm{pcl}(V)$ has degree $d_{i_1}\cdots d_{i_n}.$
\end{proof}

\subsection{Estimates on the number of $\fq$--rational solutions of  systems of diagonal equations}
Let  $t,n, d_1,\ldots,d_t, k$ be positive integers such that $1 \leq n \leq \frac{t-1}{2}$ ,  $1 \leq k \leq n$ and $d_1>\cdots>d_t \geq 2$. Let $X_1, \ldots, X_t$ be indeterminates over $\fq$ and let $g_1, \ldots,g_n \in \fq[X_1, \ldots,X_k]$ such that  $0\leq \deg(g_j)<d_t$ for $1 \leq j \leq n$. 

In what follows, we shall use an estimate on the number of
$\fq$--rational points of a projective complete intersection due to S. Ghorpade
and G. Lachaud (\cite{GhLa02a}; see also \cite{GhLa02}). In \cite[Theorem 6.1]{GhLa02a}, the authors prove that,
for an irreducible $\fq$--complete intersection $V\subset \Pp^n$
of dimension $r$, multidegree ${\bfs{d}}=(d_1,\ldots,d_{n-r})$ and  singular locus of dimension at most $0\leq s\leq r-1$, the number $|V(\fq)|$ of $\fq$--rational points of $V$ satisfies the estimate:
\begin{equation}\label{eq: estimacion de Ghorpade Lachaud}
\big||V(\fq)|-p_r\big|\leq b_{r-s-1}'(n-s-1,{\bfs{d}})\, \, q^{\frac{r+s+1}{2}}+C_s(V)q^{\frac{r+s}{2}},
\end{equation}
where $p_r:=q^r+q^{r-1}+\cdots+1$,  $b_{r-s-1}'(n-s-1,{\bfs{d}})$ is the  $(r-s-1)$--th primitive Betti of a nonsingular complete intersection in $\Pp^n$
of dimension $r-s-1$ and  multidegree ${\bfs{d}}$,
and $C_s(V):=\sum_{i=m-1}^{m-1+s}b_{i,\ell}(V)+\varepsilon_i$, where
$b_{i,\ell}(V)$ denotes the $i$--th $\ell$--adic Betti number of $V$ for a prime $\ell$ different from $p:=\mathrm{char}(\fq)$ and
$\varepsilon_i:=1$ for even $i$ and $\varepsilon_i:=0$ for odd $i$.
From \cite[Proposition 4.2]{GhLa02a}
\begin{equation}\label{cota de Betti}
b_{r-s-1}'(n-s-1,{\bfs{d}})\leq \binom{n-s}{r-s-1}\cdot (d+1)^{n-s-1},
\end{equation}
where $d:=\max\{d_1,\ldots,d_{n-r}\}$.
On the other hand, from \cite[Theorem 6.1]{GhLa02a}, we have that
\begin{equation*}
C_s(V)\leq 9\cdot 2^{n-r}\cdot ((n-r)d+3)^{n+1}.
\end{equation*}
Denote by  $\mathrm{pcl}(V)(\fq)$ the set $\fq$--rational points of $\mathrm{pcl}(V)$.   From Propositions \ref{prop: singular locus pcl V}  and \ref{prop: pcl V interseccion completa}  and  the estimate \eqref{eq: estimacion de Ghorpade Lachaud},  we have: if  $\deg(g_j)=0$ for $1\leq i \leq n$, then 
	\begin{equation}\label{estimation pcl V g constante}
	\big||\mathrm{pcl}(V)(\fq)|-p_{t-n}\big | \leq b_{t-n-1}'(t-1,{\bf{d}}) q^{(t-n+1)/2}+9 \cdot 2^{n} (n \cdot d_1+3)^{t+1}q^{(t-n)/2},
	\end{equation} on the other hand if $0 < \deg(g_j)< d_t$, then 
\begin{equation}\label{estimation pcl V g no constante}
\big||\mathrm{pcl}(V)(\fq)|-p_{t-n}\big | \leq b_{t-n-k}'(t-k,{\bf{d}}) q^{(t-n+k)/2}+9 \cdot 2^{n} (n \cdot d_1+3)^{t+1}q^{(t-n+k-1)/2},
\end{equation}
where ${\bf{d}}=(d_1 \klk d_t)$.

Now we estimate the number of $\fq$-rational points of $V^{\infty}=\mathrm{pcl}(V)\cap \{X_0=0\}\subset \Pp^{t-1}$. From Proposition \ref{prop: dimension of pcl en el infinito}, we have that  $V^{\infty}$ is a nonsingular complete intersection. We can apply the following result due to P. Deligne (see, e.g., \cite{De74}): for a nonsingular complete intersection $V \subset \Pp^n$ defined over $\fq$, of dimension $r$ and multidegree ${\bf{d}}=(d_1, \ldots, d_n)$, the following estimate holds:
\begin{equation*}\label{estimacion Deligne}
\big||V(\fq)|-p_r| \leq b'_r(n,{\bf{d}})q^{r/2},
\end{equation*}	
	where $ b'_r(n,{\bf{d}})$ is the rth-primitive Betti number of any nonsingular complete intersection of $\Pp^n$ of dimension $r$ and multidegree ${\bf{d}}$.	
	 Thus
	\begin{equation}\label{estimate V infinito}
	\big||V^{\infty}(\fq)|-p_{t-n-1}\big| \leq b_{t-n-1}'(t-1,{\bfs{d}}) q^{(t-n-1)/2}.
	\end{equation}
	
	If $g_j \in \fq$, $1\leq j \leq n$, from estimates \eqref{estimation pcl V g constante} and \eqref{estimate V infinito} we conclude that 
\begin{align} \label{estimacion: grado igual a cero}
\big||V(\fq)|-q^{t-n}\big|\leq & \big||\mathrm{pcl}(V)(\fq)|-p_{t-n}\big|+\big||V^{\infty}(\fq)|-p_{t-n-1}\big|\\ \notag
\leq &b_{t-n-1}'(t-1,{\bf{d}}) q^{(t-n+1)/2}+9 \cdot 2^{n} (n \cdot d_1+3)^{t+1}q^{(t-n)/2}\\ \notag
& +b_{t-n-1}'(t-1,{\bfs{d}}) q^{(t-n-1)/2}.\notag
\end{align}
If $0<\deg(g_j)\leq d_t$, $1\leq j \leq n$,  from estimates \eqref{estimation pcl V g  no constante} and \eqref{estimate V infinito} we obtain that

\begin{align} \label{estimacion: caso grado mayor a cero}
\big||V(\fq)|-q^{t-n}\big|\leq & \big||\mathrm{pcl}(V)(\fq)|-p_{t-n}\big|+\big||V^{\infty}(\fq)|-p_{t-n-1}\big|\\ \notag
\leq &b_{t-n-k}'(t-k,{\bf{d}}) q^{(t-n+k)/2}+9 \cdot 2^{n} (n \cdot d_1+3)^{t+1}q^{(t-n+k-1)/2}\\ \notag
& +b_{t-n-1}'(t-1,{\bfs{d}})q^{(t-n-1)/2}. \notag
\end{align}
We have the following result.
\begin{theorem}\label{teo: estimacion}
Let  $t,n, d_1,\ldots,d_t, k$ be positive integers such that  $1 \leq k \leq n$, $d_1>\cdots>d_t \geq 2$ and the matrix $A$ satisfies hypothesis $(H)$. Let $g_1, \ldots,g_n \in \fq[X_1, \ldots,X_k]$ such that  $0\leq \deg(g_j)<d_t$ for $1 \leq j \leq n$. Let $|V(\fq)|$ the number of $\fq$--rational points of $V$.
\begin{itemize}
\item  If $g_j \in \fq$ for $1\leq j\leq n$ and $ n \leq t-2$, then  $|V(\fq)|$ satisfies:
$$\big||V(\fq)|-q^{t-n}\big|\leq  q^{\frac{t-n+1}{2}}(6n\cdot d_1)^{t+1}.$$
\item If $0\leq\deg(g_j)<d_t$ for $1\leq j\leq n$, there exists $1\leq i \leq n$ such that $\deg(g_i)>0$, and $ n \leq \frac{t-1}{2}$, then $|V(\fq)|$ satisfies:
$$\big||V(\fq)|-q^{t-n}\big|\leq  q^{\frac{t-n+k}{2}}(6n\cdot d_1)^{t+1}.$$
\end{itemize}
\end{theorem}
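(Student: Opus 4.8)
The plan is to derive Theorem \ref{teo: estimacion} by combining the estimates already assembled in this subsection, namely \eqref{estimacion: grado igual a cero} and \eqref{estimacion: caso grado mayor a cero}, with an explicit bound for the primitive Betti numbers appearing there. The geometric input --- that $\mathrm{pcl}(V)$ is a complete intersection of dimension $t-n$ and multidegree bounded by $(d_1,\ldots,d_t)$ with singular locus of dimension at most $0$ (resp. $k-1$), and that $V^\infty$ is a nonsingular linear complete intersection --- is already in hand from Propositions \ref{prop: singular locus pcl V}, \ref{prop: pcl V interseccion completa} and \ref{prop: dimension of pcl en el infinito}, so the remaining work is purely to repackage the right-hand sides of \eqref{estimacion: grado igual a cero} and \eqref{estimacion: caso grado mayor a cero} into the clean closed form $q^{(t-n+1)/2}(6n d_1)^{t+1}$ (resp. $q^{(t-n+k)/2}(6n d_1)^{t+1}$). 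Note $|V(\fq)| = N$, so this is exactly the statement.

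First I would bound the Betti number $b'_{t-n-1}(t-1,\mathbf{d})$ via \eqref{cota de Betti}: with ambient dimension $n_{\mathrm{amb}}=t-1$, variety dimension $r=t-n-1$, $s=0$, and $d=\max\{d_1,\ldots,d_t\}=d_1$, this gives $b'_{t-n-1}(t-1,\mathbf{d}) \le \binom{t-1}{t-n-1}(d_1+1)^{t-1} \le 2^{t-1}(d_1+1)^{t-1}$. In the constant-$g_j$ case, I would then group the three terms on the right of \eqref{estimacion: grado igual a cero}: the dominant power of $q$ is $q^{(t-n+1)/2}$, and after factoring it out the two lower-order terms in $q^{(t-n)/2}$ and $q^{(t-n-1)/2}$ are absorbed (using $q\ge 2$, so $q^{-1/2}\le 1$ and $q^{-1}\le 1$), leaving a bound of the form $q^{(t-n+1)/2}\big(b'_{t-n-1}(t-1,\mathbf{d}) + 9\cdot 2^n(nd_1+3)^{t+1} + b'_{t-n-1}(t-1,\mathbf{d})\big)$. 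The combinatorial heart of the argument is then the inequality
\[
2\cdot 2^{t-1}(d_1+1)^{t-1} + 9\cdot 2^n(nd_1+3)^{t+1} \le (6n d_1)^{t+1},
\]
which holds since $n\ge 1$, $d_1\ge 2$, $t-n\ge 2$ so $t\ge 3$: one checks $9\cdot 2^n \le 9\cdot 2^t \le (something)$ and $(nd_1+3)^{t+1} \le (2nd_1)^{t+1}\cdot$(a harmless factor, using $3\le nd_1 + nd_1$ as $nd_1\ge 2$), while the first summand is of strictly lower degree in $t$ and easily dominated; combining, $9\cdot 2^t(2nd_1)^{t+1} + (\text{lower order}) \le (6nd_1)^{t+1}$ since $6^{t+1} \ge 9\cdot 2^t\cdot 2^{t+1} = 9\cdot 4^t\cdot 2$ for $t\ge 3$ (indeed $6^{t+1}/4^{t+1}=(3/2)^{t+1}$ grows, and one verifies the base case directly). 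I would carry out this elementary estimate explicitly but compactly.

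For the second case ($0<\deg(g_i)$ for some $i$, with $1\le n\le (t-1)/2$), the argument is identical in structure, starting from \eqref{estimacion: caso grado mayor a cero}: now the leading term is $q^{(t-n+k)/2}$, the Betti number in front of it is $b'_{t-n-k}(t-k,\mathbf{d}) \le \binom{t-k-n+1}{t-n-k}(d_1+1)^{t-k} \le 2^{t-k}(d_1+1)^{t-k}$ by \eqref{cota de Betti} with $s=k-1$, and the two remaining terms (in $q^{(t-n+k-1)/2}$ and $q^{(t-n-1)/2}$, the latter having exponent no larger since $k\ge 1$) are absorbed after factoring out $q^{(t-n+k)/2}$ and using $q\ge 2$. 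The same type of inequality $2^{t-k}(d_1+1)^{t-k} + 9\cdot 2^n(nd_1+3)^{t+1} + 2^{t-1}(d_1+1)^{t-1} \le (6nd_1)^{t+1}$ then finishes it, and it is in fact easier than the first since the Betti-number summands have smaller exponents.

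The main obstacle is not conceptual but bookkeeping: one must be careful that in the case $0<\deg(g_j)$ the exponent $(t-n-1)/2$ of the $V^\infty$ contribution is genuinely $\le (t-n+k)/2$ (true, as $k\ge 1$), so that factoring out the claimed leading power is legitimate; and one must verify the constant-absorption inequality $9\cdot 2^n(nd_1+3)^{t+1} + (\text{Betti terms}) \le (6nd_1)^{t+1}$ uniformly over all admissible $n,t,d_1$ --- the tightest regime being small $d_1$ ($d_1=2$) and small $n$ ($n=1$), where one checks $9\cdot 2\cdot 5^{t+1} + 2\cdot 2^{t-1}\cdot 3^{t-1} \le 12^{t+1}$, which holds comfortably for all $t\ge 3$. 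Once these numerical checks are recorded, the theorem follows immediately from the displayed chain of inequalities.
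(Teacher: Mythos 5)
Your proposal follows essentially the same route as the paper: start from the assembled estimates \eqref{estimacion: grado igual a cero} and \eqref{estimacion: caso grado mayor a cero}, bound the primitive Betti numbers via \eqref{cota de Betti} together with $\binom{t}{n+1}\le 2^t$, factor out the dominant power of $q$, absorb the lower-order terms using $q\ge 2$, and verify numerically that the resulting constant is at most $(6nd_1)^{t+1}$ (the paper does this via $q+\tfrac94 q^{1/2}+1\le 4q$ and $2^{t+2}(nd_1+3)^{t+1}\le(6nd_1)^{t+1}$). The only caveats are bookkeeping: your binomial indices read off from \eqref{cota de Betti} should be $\binom{t}{t-n-1}$ and $\binom{t-k+1}{t-n-k}$ (still $\le 2^t$ and $\le 2^{t-k+1}$, so nothing changes), and your intermediate replacement $nd_1+3\le 2nd_1$ needs $nd_1\ge 3$ — but you correctly isolate and check the remaining case directly, so the argument closes.
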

\begin{proof}
Suppose that $g_j \in \fq$ for $1\leq j\leq n$, from \eqref{estimacion: grado igual a cero} we need to obtain an upper bound of  the  number $b_{t-n-1}'(t-1,{\bf{d}})$.
From \eqref{cota de Betti} we have that
 $b_{t-n-1}'(t-1,{\bf{d}})\leq \binom{t}{n+1}\cdot (d_1+1)^{t-1}$. On the other hand, taking into account that 
 \begin{equation*}
\binom{t}{n+1} \leq 2^t
 \end{equation*}
Then $b_{t-n-1}'(t-1,{\bf{d}})\leq (d_1+1)^{t-1} 2^t$.  
Now, replacing in \eqref{estimacion: grado igual a cero} we obtain that
\begin{align*} \label{estimacion: grado igual a cero}
\big||V(\fq)|-q^{t-n}\big|  
\leq & q^{(t-n-1)/2} (n d_1+3)^{t+1} 2^t\Big( q+ \frac{9}{4}\cdot q^{\frac{1}{2}}+1\Big)\\ \notag
\leq &  2^{t+2} q^{\frac{t-n+1}{2}} (n \cdot d_1+3)^{t+1}  \leq q^{\frac{t-n+1}{2}} (6n \cdot d_1)^{t+1} .\notag
\end{align*}
Now, if $0<\deg(g_j)<d_t$ for $1\leq j\leq n$, the estimate is obtained with similar arguments as above.
\end{proof}
Let $N$ be the number of $\fq$--solutions of the system \eqref{eq: system}. From Theorem \ref{teo: estimacion} we obtain  Theorem \ref{estimate: number of solutions}.
\begin{theorem}
If $q>(6nd_1)^{\frac{2t+2}{t-2n}}$ y $ n \leq \frac{t-1}{2}$, then the system \eqref{eq: system} has at least one solution in $\fq^n$.
In particular, if $t$ is sufficiently larger than $2n$, then we can guarantee the existence of an $\fq$--rational solution if $q > (6nd_1)^2$.
 \end{theorem}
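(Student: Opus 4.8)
The plan is to derive the statement directly from the estimate of Theorem~\ref{estimate: number of solutions} (equivalently Theorem~\ref{teo: estimacion}) by an elementary comparison between the main term $q^{t-n}$ and the error term. Recall that in the general deformed case the estimate has the shape
\[
\big|N-q^{t-n}\big|\le q^{\frac{t-n+k}{2}}(6n\cdot d_1)^{t+1},\qquad 1\le k\le n,
\]
while when all the $g_j$ lie in $\fq$ the sharper bound $\big|N-q^{t-n}\big|\le q^{\frac{t-n+1}{2}}(6n\cdot d_1)^{t+1}$ holds. In either situation it is enough to find conditions under which the right-hand side is strictly smaller than $q^{t-n}$, since then $N\ge q^{t-n}-\big|N-q^{t-n}\big|>0$, and as $N$ is a nonnegative integer this forces $N\ge 1$.

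First I would treat the deformed case. The inequality $q^{\frac{t-n+k}{2}}(6n\cdot d_1)^{t+1}<q^{t-n}$ is equivalent, after dividing by the positive quantity $q^{\frac{t-n+k}{2}}$, to $q^{\frac{t-n-k}{2}}>(6n\cdot d_1)^{t+1}$, i.e.\ (squaring) to $q^{\,t-n-k}>(6n\cdot d_1)^{2t+2}$. Now I use $k\le n$, so that $t-n-k\ge t-2n$, together with the hypothesis $n\le\frac{t-1}{2}$, which gives $t-2n\ge 1>0$; hence $q^{\,t-n-k}\ge q^{\,t-2n}$ and it suffices to require $q^{\,t-2n}>(6n\cdot d_1)^{2t+2}$, that is
\[
q>(6n\cdot d_1)^{\frac{2t+2}{t-2n}}.
\]
This is precisely the hypothesis of the theorem, so $N\ge 1$ in this case. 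For the case $g_j\in\fq$ the identical computation with the exponent $\frac{t-n+1}{2}$ in place of $\frac{t-n+k}{2}$ leads to the weaker requirement $q>(6n\cdot d_1)^{\frac{2t+2}{t-n-1}}$; since $n\ge 1$ we have $t-n-1\ge t-2n$, so this is already implied by $q>(6n\cdot d_1)^{\frac{2t+2}{t-2n}}$. Thus a single hypothesis covers both regimes.

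Finally, for the ``in particular'' assertion I would observe that, for a fixed value of $n$, the function $t\mapsto \frac{2t+2}{t-2n}$ is decreasing on $t>2n$ and satisfies $\lim_{t\to\infty}\frac{2t+2}{t-2n}=2$. Consequently, once $t$ is taken sufficiently large compared with $2n$, the exponent $\frac{2t+2}{t-2n}$ can be made as close to $2$ as we wish, so that the sufficient condition above is subsumed by $q>(6n\cdot d_1)^{2}$ up to an arbitrarily small loss in the exponent that vanishes in the limit. I do not expect any genuine obstacle here: the whole analytic content is packed into Theorem~\ref{estimate: number of solutions}, and what remains is bookkeeping with exponents; the only point requiring a little care is to carry out the comparison uniformly in $k$ and to keep track of the two cases (constant versus non-constant $g_j$), which is exactly what the inequalities $t-n-k\ge t-2n$ and $t-n-1\ge t-2n$ accomplish.
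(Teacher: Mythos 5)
Your proposal is correct and is exactly the argument the paper intends (the theorem is stated as an immediate consequence of Theorem~\ref{estimate: number of solutions}, with no separate proof given): one compares the main term $q^{t-n}$ with the error term, uses $k\le n$ and $n\le\frac{t-1}{2}$ to reduce to $q^{\frac{t-2n}{2}}>(6nd_1)^{t+1}$, and observes that $\frac{2t+2}{t-2n}\downarrow 2$ as $t\to\infty$ for the final assertion. No gaps; your handling of the two cases (constant versus non-constant $g_j$) via $t-n-1\ge t-2n$ is the right bookkeeping.
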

 
 \begin{remark} We consider the case $n=1$. Suppose  that $\deg(g_1)=0$, namely, $g_1$ is a nonzero constant in $\fq$. From Theorem \ref{estimate: number of solutions} we have that
 $$\big|N-q^{t-1}\big|\leq  q^{\frac{t}{2}}(6d_1)^{t+1}.$$
We observe that our estimate is of the same order than Weil’s (see, e.g., \cite{Weil49}). On the other hand, this result complements \cite[Theorem 4.1]{PP20} because it does not require that $d_1=\cdots=d_t$. 
 \end{remark} 
 
 \begin{remark} \textbf{Deformed Diagonal Equations}. If $n=1$ we obtain the following deformed diagonal equation:
$$a_{11}X_1^{d_1}+\cdots+ a_{1t}X_t^{d_t}=g_1, $$
with $g_1\in \fq[X_1]$, $0<\deg (g_1)< d_t$, $d_1>\cdots >  d_t \geq 2$ and $\mathrm{char}(\fq)$ does not divide $d_i$ for $1\leq i \leq t$.
From Theorem \ref{estimate: number of solutions} we have that 
$$\big|N-q^{t-1}\big|\leq  q^{\frac{t}{2}}(6d_1)^{t+1}.$$ This result complements \cite[Theorem 4.1] {PP20} in the case that $g$ is an univariate polynomial because the exponents $d_1,\ldots,d_t$ are distinct. 
\end{remark}

\begin{remark}  In \cite{Spackman79} and \cite{Spackman81}, K. W. Spackman studies the number  $N$ of $\fq$--rational solutions of the system \eqref{eq: system} when the polynomials $g_j \in \fq$ for $1\leq j\leq n$. Given $\mu$ a positive integer he defines the parameter $\mu$ of non--singularity. Indeed, for a given $(n\times t)$--matrix in $\fq^{n\times t}$, he says that it is  $\mu$--{\it{weakly non--singular}} if and only if for each natural number $k$ satisfying $\mu \cdot (k-1)+1\leq \min\{t, \mu \cdot (n-1)+1\}$, the matrix has the property that among any $\mu \cdot (k-1)+1$ columns vectors there are at least $k$ $\fq$--linearly independent ones. If $\mu=1$, to be $1$--weakly non--singular is equivalent to satisfies the hypothesis $(H)$.
 In \cite[Theorem 1.1]{Spackman79}  the author proves that if $\mu=1$, $n\geq 2$ then
 $$N=q^{t-n}+ \mathcal{O}(q^{\frac{t-1}{2}}),$$ where the implied constant depends only on $n$, $t$, $d_1,\ldots,d_t$, but it is not explicitly given.
 Theorem \ref{estimate: number of solutions} improved this result in several aspects. Indeed, on one hand, we give an explicit estimate on the number $N$ and we obtain that $N=q^{t-n}+\mathcal{O}(q^{\frac{t-1}{2}-\frac{n-2}{2}})$. On the other hand, the equations can be matched to a non-necessarily constant polynomial.   

In \cite[Theorem 3.2]{Spackman81} the author obtains an explicit estimate on $N$ when $\mu\geq 2$ and $g_j \in \fq$ for $1\leq j \leq n$. More precisely, the following estimate holds
$$|N-q^{t-n}|\leq (d_1-1)\cdots (d_t-1)\cdot (2^{t}-1)\cdot q^{\frac{t+(\mu-2)(n-1)}{2}},$$
 where $n$, $t$ and $\mu$ are positive integers with $\mu\geq 2$ and $t>\mu\cdot(n-1)$. Namely, $N=q^{t-n}+ \mathcal{O}(q^{\frac{t+\epsilon}{2}}),$ $\epsilon>0$.
  Theorem \ref{estimate: number of solutions} improves this result since we have that $N=q^{t-n} + \mathcal{O}(q^{\frac{t-(n-1)}{2}})$, instead of the condition over the coefficients' matrix when $\mu \geq 2$ seems to be weaker than hypothesis $(H)$.
 \end{remark}
 \subsection{Case $d_1=\cdots=d_t=d\geq 2$} 
We consider the following system:

\begin{equation}\label{eq: system mismo exponente}\left \{\begin{array}{ccl}
a_{11}X_1^{d} & + a_{12}X_2^{d} + \cdots + & a_{1t}X_t^{d} = 0 \\
a_{21}X_1^{d} & + a_{22}X_2^{d} + \cdots + & a_{2t}X_t^{d} =  0 \\
\;\vdots &  &\quad\vdots \\
a_{n1}X_1^{d} &+ a_{n2}X_2^{d} + \cdots +& a_{nt}X_t^{d} = 0
\end{array}\right.
\end{equation}
Suppose that the coefficients' matrix of the above system  satisfies hypothesis $(H)$ and $n\leq t-2$. Let $f_i:=a_{i1}X_1^{d}  + a_{i2}X_2^{d} + \cdots + a_{it}X_t^{d}$, $1\leq i \leq n$. From Theorem \ref{teo: dimension of V}, $V=V(f_1,\ldots,f_n)\subset \Pp^{t-1}$ is a set theoretic projective complete intersection of dimension $t-n-1$. On the other hand, we consider the set $C\subset\A^t $ defined as in \eqref{def C}.
From the arguments of Theorem \ref{prop: singular locus g no constante} when $g_i\in \fq$, we have that $C$ is an affine cone of dimension at most $0$.  Then, we deduce that $V \subset \Pp^{t-1}$ is a nonsingular projective variety. From Theorem \ref{theorem: eisenbud 18.15} and $n\leq t-2$, we have that $(f_1,\ldots,f_n)$ is a radical ideal then $V$ is a complete intersection.

Let $\overline{N}$ the number  of  $\fq$--rational projective points of $V$. From \cite{De74} the following estimate holds:
\begin{equation*}
	\big|\overline{N}-p_{t-n-1}\big|\leq   b'_{t-n-1}(t-1,{\bf{d}})q^{\frac{t-n-1}{2}}.
	\end{equation*}
	Since $|V(\fq)|=\overline{N}(q-1)+1$ we conclude that 
	\begin{equation}\label{eq: estimate caso 0}
	\big||V(\fq)|-q^{t-n}\big|\leq   (q-1)2^t(d+1)^{t-1}q^{\frac{t-n-1}{2}}.
	\end{equation}
	In \cite[Theorem II]{T64} A. Tietäväinen studies this type of systems. In concrete he affirms that if $c:=(d,q-1)\geq 3$ and $t\geq 2n(n+\log_{2}(c-1))$ then there exists a non-trivial solution in $\fq^t$. From the above estimate it is easy to obtain the following result.
\begin{proposition} If $q>(4d)^2$ and $t> (n+1)\frac{\log_2(q)}{\log_2(q)-2\log_2(4d)}$ then the system \eqref{eq: system mismo exponente} has at least an $\fq$--rational solution.
\end{proposition}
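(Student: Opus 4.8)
The plan is the following. Since the equations of \eqref{eq: system mismo exponente} are homogeneous of degree $d$, the trivial point $\mathbf 0\in\fq^t$ is always a solution, so the real content of the statement is the existence of a \emph{nontrivial} $\fq$--rational solution. Grouped by the scaling action of $\fq^{\times}$, such solutions correspond bijectively to the $\fq$--rational points of the projective complete intersection $V=V(f_1,\ldots,f_n)\subset\Pp^{t-1}$; hence it suffices to prove $\overline N\ge 1$, where $\overline N$ is the number of $\fq$--rational points of $V$, as in the discussion preceding the statement. Recall that in this subsection $V$ has been shown to be a \emph{nonsingular} complete intersection of dimension $t-n-1$ and multidegree $\mathbf d=(d,\ldots,d)$, so Deligne's estimate applies; combined with $n\le t-2$ (whence $t-n-1\ge 1$ and $p_{t-n-1}>q^{t-n-1}$) it gives
\[
\overline N\ \ge\ p_{t-n-1}-b'_{t-n-1}(t-1,\mathbf d)\,q^{\frac{t-n-1}{2}}\ >\ q^{t-n-1}-b'_{t-n-1}(t-1,\mathbf d)\,q^{\frac{t-n-1}{2}} .
\]
Consequently $\overline N>0$, i.e.\ $\overline N\ge 1$, as soon as $q^{\frac{t-n-1}{2}}\ge b'_{t-n-1}(t-1,\mathbf d)$.

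Next I would make this right--hand side explicit. By the Betti bound \eqref{cota de Betti}, applied with ambient space $\Pp^{t-1}$, dimension $t-n-1$, and $d_1=\cdots=d_t=d$, one has $b'_{t-n-1}(t-1,\mathbf d)\le\binom{t}{n+1}(d+1)^{t-1}\le 2^t(d+1)^{t-1}$; and since $d\ge 2$ the crude inequalities $2^t(d+1)^{t-1}\le 2^t(2d)^{t-1}=2^{2t-1}d^{t-1}<(4d)^t$ hold. It is therefore enough to guarantee $q^{\frac{t-n-1}{2}}\ge (4d)^t$, which upon taking base--$2$ logarithms becomes
\[
(t-n-1)\log_2 q\ \ge\ 2t\log_2(4d) .
\]

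Finally, I would check that the hypotheses deliver exactly this inequality. The assumption $q>(4d)^2$ is equivalent to $\log_2 q-2\log_2(4d)>0$, so the denominator in the bound on $t$ is positive; multiplying $t>(n+1)\dfrac{\log_2 q}{\log_2 q-2\log_2(4d)}$ through by it preserves the direction of the inequality and yields $t\bigl(\log_2 q-2\log_2(4d)\bigr)>(n+1)\log_2 q$, which rearranges precisely to $(t-n-1)\log_2 q>2t\log_2(4d)$, as needed. I do not expect any genuine obstacle: the argument is just a chain of the geometric facts already established in this subsection, Deligne's bound, and elementary estimates. The two places that require a little care are the opening reduction to the projective count $\overline N$ (so that ``at least one solution'' is read as ``at least one nontrivial solution''), and choosing a Betti--number bound crude enough to collapse to the clean condition $q^{(t-n-1)/2}\ge(4d)^t$ while still being implied by the stated lower bound on $t$.
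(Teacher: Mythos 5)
Your argument is correct and follows essentially the same route the paper intends: the paper derives the estimate \eqref{eq: estimate caso 0} from Deligne's bound together with $|V(\fq)|=\overline N(q-1)+1$ and the Betti bound $b'_{t-n-1}(t-1,\mathbf d)\le 2^t(d+1)^{t-1}<(4d)^t$, and the proposition is exactly the condition $q^{(t-n-1)/2}>(4d)^t$ rewritten via base-$2$ logarithms under $q>(4d)^2$. Working directly with the projective count $\overline N\ge 1$ rather than with the affine inequality is only a cosmetic difference, and your reading of ``a solution'' as ``a nontrivial solution'' is the intended one.
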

It is easy to see that 	$1<\frac{\log_2(q)}{\log_2(q)-2\log_2(4d)}\leq 256d^4$ for all $q>16d^2$ while  Tietäväinen's result implies that $n+1\leq n+\log_2(c-1)\leq n+d-1$. Hence, for $n>(4d)^4$, our condition over $t$ is better than Tietäväinen's.
We observe that our result holds for all $d\geq 2$ while Tietäväinen's result holds if $d\geq 3$ and $d$ and $q-1$ have a common divisor greater or equal than $3$.		
Although Tietäväinen's result allows us to consider small values of $q$, sometimes it can not be applied when $q> 16d^2$, for example the case $d=3$ and $q=149$. In conclusion we can say that the Tietäväinen's result and ours are complementary.
	\begin{remark}
 Observe that if $n=1$ we have the equation: 
$$a_{11}X_1^{d}  + a_{12}X_2^{d} + \cdots +  a_{1t}X_t^{d} = 0,$$
Then from \eqref{eq: estimate caso 0} we obtain  the following estimate:
 \begin{equation*}
	\big|N-q^{t-1}\big|\leq   (q-1)2^t(d+1)^{t-1}q^{\frac{t-2}{2}},
	\end{equation*}
which agrees with the well known Weil's estimate for diagonal equations (see \cite{Weil49}). 
\end{remark}
\begin{remark} Suppose now we have the following system:
\begin{equation*}\label{eq: system mismo exponente igual gi}\left \{\begin{array}{ccl}
a_{11}X_1^{d} & + a_{12}X_2^{d} + \cdots + & a_{1t}X_t^{d} = g_1(X_1,\ldots,X_k) \\
a_{21}X_1^{d} & + a_{22}X_2^{d} + \cdots + & a_{2t}X_t^{d} =  g_2(X_1,\ldots,X_k) \\
\;\vdots &  &\quad\vdots \\
a_{n1}X_1^{d} &+ a_{n2}X_2^{d} + \cdots +& a_{nt}X_t^{d} = g_n(X_1,\ldots,X_k),
\end{array}\right.
\end{equation*}
where $g_1, \ldots,g_n \in \fq[X_1, \ldots,X_k]$ are such that  $0\leq \deg(g_j)<d_t$ for $1 \leq j \leq n$. 
Theorem \ref{teo: dimension of V}, Proposition \ref{prop: singular locus g no constante}, Corollary \ref{coro:radical ideal V} and Theorem \ref{teo: V es interseccion completa} work in the case that the exponents $d_i$, $1\leq i \leq n$, are the same.

On the other hand, with similar arguments of Section \ref{geo proyectiva}, Propositions \ref{prop: singular locus pcl V} and \ref{prop: pcl V interseccion completa} and Corollary \ref{corolario: puntos sing en el infinito} hold in this case. 
Then, from Theorem \ref{estimate: number of solutions}, $N$ satisfies the following estimates:
\begin{itemize}
	\item If $g_j \in \fq$, $1\leq j\leq n$ and $n\leq t-2$,  
	$$\big|N-q^{t-n}\big|\leq  q^{\frac{t-n+1}{2}}(6n\cdot d)^{t+1}.$$ 
 \item If $0\leq\deg(g_j)<d_t$, $1\leq j\leq n$, there exists $1\leq i \leq n$ such that $\deg(g_i)>0$ and $ n \leq \frac{t-1}{2}$,  
$$\big|N-q^{t-n}\big|\leq  q^{\frac{t-n+k}{2}}(6n\cdot d)^{t+1},$$ 
\end{itemize}
\end{remark}
 \subsection{Generalized Markoff-Hurwitz-type systems}
A concrete example of a system of the form  \eqref{eq: system} are the Markoff-Hurwitz systems. The Markoff-Hurwitz equations have been very well studied (see, e.g., \cite{Mo63}, \cite{JGC20} and \cite{PP20}) but, there are not results in the literature about this type of systems.

Let  $t,n, d_1,\ldots,d_t$ be positive integers such that $ n <\frac{t-1}{2}$, $d_1>\cdots>d_t \geq 2$, and $\mathrm{char}(\fq)$ does  not divide $d_i$ for $1\leq i \leq t$. Let $c_{ij}$ be positive integers such that $1 \leq i,j \leq n$ and $c_{j1}+\cdots+c_{jn} <d_t$, $1 \leq j \leq n.$ We consider the following system of $n$  generalized Markoff-Hurwitz-type equations with $t$ unknowns over $\fq$
\begin{equation}\label{eq: system markoff hurwitz}\left \{\begin{array}{ccl}
a_{11}X_1^{d_1} & + a_{12}X_2^{d_2} + \cdots + & a_{1t}X_t^{d_t} +a_1=b_1X_1^{c_{11}}\ldots x_n^{c_{1n}} \\
a_{21}X_1^{d_1} & + a_{22}X_2^{d_2} + \cdots + & a_{2t}X_t^{d_t}+a_2 =b_2X_1^{c_{21}}\ldots X_n^{c_{2n}} \\
\;\vdots &  &\quad\vdots \\
a_{n1}X_1^{d_1} &+ a_{n2}X_2^{d_2} + \ldots +& a_{nt}X_t^{d_t}+ a_n = b_nX_1^{c_{n1}}\ldots X_n^{c_{nn}},
\end{array}\right.
\end{equation}
where there exists $a_j \neq 0$ with $1\leq j \leq n$ and $b_1, \ldots, b_n \neq 0$.
Assume that  the coefficients' matrix of the above system satisfies the hypothesis $(H)$.
Denote by $N$ the number of $\fq$--rational solutions of \eqref{eq: system markoff hurwitz}.  Let $g_j:=b_jX_1^{c_{j1}}\ldots X_n^{c_{jn}}-a_j$, $1 \leq j \leq n$. Since $\deg(g_j) < d_t$, $1 \leq j \leq n$, from Theorem \ref{estimate: number of solutions} we obtain the following result.
\begin{theorem} \label{teo: system markoff hurwitz}
	With the same hypotheses as above, $N$ satisfies the following estimate:  
	\begin{equation*} 
\big|N-q^{t-n}\big|\leq  q^{\frac{t}{2}}(6n\cdot d_1)^{t+1}.
	\end{equation*}
\end{theorem}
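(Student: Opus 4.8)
The plan is to recognize the Markoff--Hurwitz system \eqref{eq: system markoff hurwitz} as a special instance of the deformed diagonal system \eqref{eq: system} and then invoke Theorem \ref{estimate: number of solutions} directly. First I would set $g_j:=b_jX_1^{c_{j1}}\cdots X_n^{c_{jn}}-a_j$ for $1\leq j\leq n$, so that the $j$--th equation of \eqref{eq: system markoff hurwitz} reads $a_{j1}X_1^{d_1}+\cdots+a_{jt}X_t^{d_t}=g_j(X_1,\ldots,X_n)$, which is exactly the shape of \eqref{eq: system} with $k=n$ and with the $g_j$ lying in $\fq[X_1,\ldots,X_n]$.

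Next I would check that the hypotheses of the second item of Theorem \ref{estimate: number of solutions} are satisfied. The coefficients' matrix satisfies $(H)$ by assumption; we have $d_1>\cdots>d_t\geq 2$ and $\mathrm{char}(\fq)\nmid d_i$ by assumption; the assumption $n<\frac{t-1}{2}$ yields $1\leq n\leq\frac{t-1}{2}$, and $1\leq k=n\leq n$ holds trivially. The only point requiring a line of verification is the degree condition on the $g_j$: since each $c_{ij}$ is a positive integer and $b_j\neq 0$, the polynomial $g_j$ has degree exactly $c_{j1}+\cdots+c_{jn}$, which is $\geq n\geq 1$ and, by the standing hypothesis $c_{j1}+\cdots+c_{jn}<d_t$, is strictly below $d_t$. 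Hence $0<\deg(g_j)<d_t$ for every $j$, so in particular $\deg(g_i)>0$ for some $i$, and the hypotheses of the second item apply.

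Finally, applying that estimate with $k=n$ gives
\[
\big|N-q^{t-n}\big|\leq q^{\frac{t-n+k}{2}}(6n\cdot d_1)^{t+1}=q^{\frac{t}{2}}(6n\cdot d_1)^{t+1},
\]
which is the asserted bound. I do not expect any genuine obstacle: all the geometric content---that the variety attached to the system is a normal complete intersection of the right dimension and degree, with singular locus of dimension at most $k-1$ and no singular points at infinity, feeding into the Ghorpade--Lachaud and Deligne estimates---has already been carried out en route to Theorem \ref{estimate: number of solutions}, and the present statement is just the instantiation $k=n$, where the exponent $\tfrac{t-n+k}{2}$ collapses to $\tfrac{t}{2}$.
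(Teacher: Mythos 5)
Your proposal is correct and follows exactly the paper's own route: the paper likewise sets $g_j:=b_jX_1^{c_{j1}}\cdots X_n^{c_{jn}}-a_j$, notes $\deg(g_j)<d_t$, and cites Theorem \ref{estimate: number of solutions} with $k=n$ so that the exponent $\tfrac{t-n+k}{2}$ becomes $\tfrac{t}{2}$. Your verification of the degree condition ($b_j\neq 0$ and $c_{ij}\geq 1$ give $0<\deg(g_j)<d_t$) is the only detail the paper leaves implicit, and it is handled correctly.
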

In what follows we obtain sufficient conditions for the existence of an $\fq$--rational solution with nonzero coordinates namely, with coordinates in $\fq^{*}$. Denote by  $N^*$  the number of this type of solutions of  \eqref{eq: system markoff hurwitz}. Let $N^{=}$ be the number of $\fq$-rational solutions of \eqref{eq: system markoff hurwitz} with at least one coordinate equals to zero. Note that $N^* =N- N^{=}$. 
 
By the inclusion-exclusion principle we obtain that
\begin{equation}\label{Pcio inclusion exclusion}
N^{=}=\sum_{i=1}^t (-1)^{i+1} \binom{t}{i} N_i.
\end{equation}

We shall need the following estimate on the number of $\fq$--rational solutions of \eqref{eq: system markoff hurwitz} with exactly $i$ coordinates equal to zero. We denote this number by  $N_i$. 

\begin{proposition} \label{estimation Ni}  With the same hypotheses as above, the number $N_i$ satisfies the following estimate:
	
	If $1\leq i \leq t-2n-1$, then
	\begin{equation} \label{case 1}
	|N_i-q^{t-i-n}| \leq q^{\frac{t-i}{2}}(6n\cdot d_1)^{t-i+1}.
	\end{equation}
	If $t-2n \leq i \leq  t-n$, then 
	\begin{equation*}\label{case 2}
	N_i \leq d_1^n q^{t-n-i}
	\end{equation*}
	If $t-n+1 \leq i \leq t$, then $N_i \leq d_1^{n}$.

\end{proposition}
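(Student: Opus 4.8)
The plan is to obtain each of the three estimates by \emph{specialising variables}. Fix $i$ and a subset $S\subseteq\{1,\ldots,t\}$ with $|S|=i$, set $X_\ell=0$ for $\ell\in S$, and count the $\fq$--rational solutions of \eqref{eq: system markoff hurwitz} having these $i$ coordinates equal to zero; this is a system of the same shape in the $t':=t-i$ remaining variables, whose coefficient matrix $A'$ is $A$ with the $i$ columns indexed by $S$ deleted. Every $(n\times n)$--submatrix of $A'$ is one of $A$, so hypothesis $(H)$ is inherited; moreover, since $t\ge 2n+2$, $(H)$ implies that any set of at most $n$ columns of $A$ is linearly independent, hence $\rank A'=\min\{n,t'\}$. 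I will also keep track of the fact that the bounds obtained do not depend on $S$, which is what \eqref{Pcio inclusion exclusion} needs. Two elementary observations organise the argument: \emph{(a)} since every exponent $c_{j\ell}$ in \eqref{eq: system markoff hurwitz} is positive, zeroing any variable $X_\ell$ with $1\le\ell\le n$ makes all the right--hand monomials vanish, so the reduced system is then diagonal with constant right--hand side $(-a_1,\ldots,-a_n)$, whereas if $S\cap\{1,\ldots,n\}=\emptyset$ the reduced right--hand sides $g'_j=b_jX_1^{c_{j1}}\cdots X_n^{c_{jn}}-a_j$ have degree $\sum_\ell c_{j\ell}\in[n,d_t)$; \emph{(b)} $|S\cap\{1,\ldots,n\}|\ge i+n-t$, so as soon as $i\ge t-n+1$ the reduced right--hand side is automatically constant.

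In the range $1\le i\le t-2n-1$ one has $t'\ge 2n+1$, so $\rank A'=n$, $n\le (t'-1)/2$ and $n\le t'-2$, the surviving exponents remain pairwise distinct and $\ge 2$, and $\deg g'_j<d_t\le\min_{\ell\notin S}d_\ell$. Hence the reduced system is of the form \eqref{eq: system} in $t'$ variables and Theorem \ref{estimate: number of solutions} applies: its non--constant bullet (with $k=n$) gives $|N_i-q^{t'-n}|\le q^{(t'-n+n)/2}(6n\cdot d_1)^{t'+1}=q^{t'/2}(6n\cdot d_1)^{t'+1}$, while its constant bullet gives the smaller $q^{(t'-n+1)/2}(6n\cdot d_1)^{t'+1}$; since $n\ge 1$, in both cases $|N_i-q^{t-i-n}|\le q^{(t-i)/2}(6n\cdot d_1)^{t-i+1}$, which is \eqref{case 1}.

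In the range $t-2n\le i\le t-n$ the main estimate no longer applies ($t'\le 2n$), so I would use a crude degree bound instead. Here $\rank A'=n$ still holds, and running the argument of Claim \ref{claim: f sucesion regular} for the (row--reduced) reduced polynomials shows that the reduced variety $V'\subset\A^{t'}$ is a set--theoretic complete intersection of pure dimension $t'-n$, whose degree is at most $d_1^{\,n}$ by iterating the B\'ezout inequality \eqref{eq: Bezout}; then \eqref{eq: upper bound -- affine gral} gives $N_i\le|V'(\fq)|\le d_1^{\,n}q^{t'-n}=d_1^{\,n}q^{t-n-i}$. Finally, in the range $t-n+1\le i\le t$ one has $t'\le n-1$, so by observation \emph{(b)} the reduced system reads $A'\cdot(x_\ell^{d_\ell})_{\ell\notin S}=(-a_1,\ldots,-a_n)^{\mathsf t}$ with $\rank A'=t'$ equal to the number of unknowns; thus the linear system in the quantities $y_\ell:=x_\ell^{d_\ell}$ has at most one solution, and for that solution each $y_\ell$ has at most $d_\ell\le d_1$ preimages $x_\ell\in\fq$, whence $N_i\le\prod_{\ell\notin S}d_\ell\le d_1^{\,t'}\le d_1^{\,n}$.

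The step I expect to be the main obstacle is this last regime $t'=t-i<n$, where the reduced system is overdetermined and the complete--intersection machinery of Sections 2--3 yields nothing useful; the resolution is to use hypothesis $(H)$ \emph{linearly} rather than geometrically — it is precisely $(H)$ (together with the pigeonhole count \emph{(b)}) that forces some monomial variable to vanish, so that the right--hand side degenerates to a constant, and it is again $(H)$ that forces the reduced $n\times t'$ matrix to have full column rank, so that only finitely many admissible tuples $(x_\ell^{d_\ell})_{\ell\notin S}$ remain; once these two facts are in hand the count over $\fq$ is elementary. A secondary, purely bookkeeping, point is to check that all three estimates hold uniformly in the choice $S$ of the zeroed coordinates, for which it is convenient that in every case the diagonal (constant right--hand side) estimate is at least as sharp as the deformed one.
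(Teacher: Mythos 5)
Your proposal is correct and follows essentially the same route as the paper: specialise $i$ coordinates to zero, apply Theorem \ref{estimate: number of solutions} to the reduced system when $t-i\ge 2n+1$, use the complete-intersection degree bound \eqref{eq: upper bound -- affine gral} in the middle range, and exploit that $(H)$ forces the overdetermined reduced system to have a zero-dimensional solution set in the last range. Your only (harmless) deviation is in the regime $i\ge t-n+1$, where you count solutions by hand via the linear system in $y_\ell=x_\ell^{d_\ell}$ rather than invoking the dimension-zero case of the degree bound as the paper does; both give $N_i\le d_1^{\,n}$.
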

\begin{proof} 
Suppose that $1 \leq i \leq t-2n-1$. We observe that $N_i$ is the number of $\fq$--rational solutions of  a system of $n$ deformed diagonal equations with $t-i$ unknowns. The coefficients' matrix of the system satisfies hypothesis $(H)$.
Then we deduce \eqref{case 1} from Theorem \ref{estimate: number of solutions}.


Suppose now $t-2n\leq i\leq t-n$. 
In this case, $N_i$ is the number of $\fq$--rational solutions of a system of $n$ deformed diagonal equations with $t-i\geq n$ unknowns.
We observe that, since the coefficients' matrix of the system satisfies hypothesis $(H)$ we can follow the same arguments of the proofs of Theorems \ref{teo: dimension of V} and \ref{teo: V es interseccion completa}.  Hence, if $V_i \subset \A^{t-i}$ is the set of solutions of the corresponding system then $V_i$ is an $\fq$--affine complete intersection of dimension $t-i-n$ and $\deg(V_i)\leq d_1^n$. Finally, from \eqref{eq: upper bound -- affine gral}, we have that $N_i \leq d_1^n \cdot q^{t-i-n}.$

Let $t-n+1\leq i \leq t$. In this case $n$, the number of equations, is greater than the number of unknowns $t-i$. Since the coefficients' matrix of the system \eqref{eq: system}, satisfies hypothesis $(H)$, then, the coefficients' matrix of the system of this case, has rank $t-i$. So the set of solutions has dimension at most zero. Hence, from  \eqref{eq: upper bound -- affine gral}, $N_i\leq d_1^n$.
\end{proof}


From \eqref{Pcio inclusion exclusion} and taking into account that $N^{*}=N-N^{=}$, we have that
\begin{align*}
N^*&=N+ \sum_{i=1}^t (-1)^{i} \binom{t}{i} N_i=N+\!\! \sum_{i=1}^{\!t-2n-1}\! (-1)^{i} \binom{t}{i} N_i+\sum_{i=t-2n}^t (-1)^{i} \binom{t}{i} N_i\\
&=N+ \!\! \sum_{i=1}^{t-2n-1} (-1)^{i}\! \binom{t}{i} \!(N_i-q^{t-n-i})+ \!\!\sum_{i=1}^{t-2n-1} (-1)^{i} \! \binom{t}{i} q^{t-n-i}+\sum_{i=t-2n}^t (-1)^{i}\! \binom{t}{i} N_i\\
&=N\!-\!q^{t-n}+ \!\!\!\sum_{i=1}^{t-2n-1}(-1)^i \binom{t}{i} \! (N_i-q^{t-n-i})\!\!+ \!\! \sum_{i=0}^{t-2n-1}(-1)^i\! \binom{t}{i}q^{t-n-i}+\!\!\sum_{i=t-2n}^t \!(-1)^{i} \binom{t}{i} N_i.
\end{align*}
Thus,  we deduce that
\begin{equation*}
N^*\!\!-\!\!\sum_{i=0}^{t-2n-1}\!\!(-1)^i \binom{t}{i}q^{t-n-i}\!\!=(N-q^{t-n})\!\!+ \!\!\sum_{i=1}^{t-2n-1}\!\!(-1)^i \! \binom{t}{i} (N_i-q^{t-n-i})\!\!+ \!\!\sum_{i=t-2n}^t (-1)^{i}\! \binom{t}{i} N_i.
\end{equation*}
Therefore, from Theorem \ref{teo: system markoff hurwitz} and Proposition \ref{estimation Ni}:
\begin{align*}\label{a distinto cero}
\Big|N^*  \!\!\!-\!\! \!\!\!\sum_{i=0}^{t-2n-1} \!\!(-1)^i\!\binom{t}{i}q^{t-n-i}\Big| & \!\leq\! |N-q^{t-n}|+\sum_{i=1}^{t-2n-1}\binom{t}{i}|N_i-q^{t-n-i}| 
+ \sum_{i=t-2n}^{t}\binom{t}{i}N_i\\
& \leq \! (6nd_1)^{t+1}\!\Big(q^{\frac{t}{2}} \!\! +\!\!\! \sum_{i=1}^{t-2n-1}\!\!\!\binom{t}{i} q^{\frac{t-i}{2}} \Big)+  \!\!\!\! \sum_{i=t-2n}^{t-n}\!\!N_i\binom{t}{i}\!\! \!+ \!\!\!\!\sum_{i=t-n+1}^{t}\!\!\!\!\!N_i\binom{t}{i} \\
& \leq\!  (6nd_1)^{t+1}\Bigg(q^{\frac{t}{2}} \!\! +  \!\! 2^{t}q^{\frac{t-1}{2}}\Bigg)  \!\!+ 2^t d_1^{n} (q^n+1)\\
& \leq \!(6nd_1)^{t+1}q^{\frac{t-1}{2}}(2^t+q^{\frac{1}{2}}) \!\! +  \!\!2^{t+1} d_1^{n}q^{\frac{t-1}{2}}\\
& \leq \! 2^{t+2}(6nd_1)^{t+1}q^{\frac{t}{2}}\\
&\leq\!  (15nd_1)^{t+1}q^{\frac{t}{2}}
\end{align*}
We have proved the following result.
\begin{proposition}\label{estimation N*}
If $q>2$, $1\leq n < \frac{t-1}{2}$, $d_1\geq 2$ and $\mathrm{char}(\fq)$ does  not divide $d_i$ for $1\leq i \leq t$. Then, the number $N^*$ of $\fq$--rational solutions of \eqref{eq: system markoff hurwitz} with nonzero coordinates  satisfies the following estimate:
	$$\Bigg|N^*-\bigg(\frac{(q-1)^t}{q^n}-\sum_{i=t-2n}^{t}(-1)^i \binom{t}{i}q^{t-n-i}\bigg)\Bigg| \leq  (15nd_1)^{t+1}q^{\frac{t}{2}}.$$
\end{proposition}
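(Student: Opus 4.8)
The plan is to express the ``all coordinates nonzero'' count $N^{*}$ in terms of the total count $N$ and the partial counts $N_i$ by inclusion--exclusion, and then substitute the estimate for $N$ from Theorem \ref{teo: system markoff hurwitz} together with the three estimates for $N_i$ from Proposition \ref{estimation Ni}. First I would combine $N^{*}=N-N^{=}$ with \eqref{Pcio inclusion exclusion} to get
\[
N^{*}=N+\sum_{i=1}^{t}(-1)^{i}\binom{t}{i}N_i .
\]
Then I would split this sum at the index $i=t-2n-1$, which is exactly the largest index for which the sharp estimate \eqref{case 1} (giving $N_i$ close to $q^{t-n-i}$) is available; beyond it only the crude bounds $N_i\le d_1^{n}q^{t-n-i}$ and $N_i\le d_1^{n}$ hold. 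Replacing $N_i$ by $q^{t-n-i}$ in the range $1\le i\le t-2n-1$, subtracting $\sum_{i=0}^{t-2n-1}(-1)^{i}\binom{t}{i}q^{t-n-i}$ from both sides, and grouping so that the $i=0$ term $q^{t-n}$ pairs with $N$, I obtain the identity
\[
N^{*}-\sum_{i=0}^{t-2n-1}(-1)^{i}\binom{t}{i}q^{t-n-i}
=(N-q^{t-n})+\sum_{i=1}^{t-2n-1}(-1)^{i}\binom{t}{i}\bigl(N_i-q^{t-n-i}\bigr)+\sum_{i=t-2n}^{t}(-1)^{i}\binom{t}{i}N_i .
\]

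The next step is to recognize the quantity subtracted on the left as the main term of the statement: by the binomial theorem $\sum_{i=0}^{t}(-1)^{i}\binom{t}{i}q^{t-i}=(q-1)^{t}$, hence
\[
\sum_{i=0}^{t-2n-1}(-1)^{i}\binom{t}{i}q^{t-n-i}=\frac{(q-1)^{t}}{q^{n}}-\sum_{i=t-2n}^{t}(-1)^{i}\binom{t}{i}q^{t-n-i},
\]
which is exactly the expression appearing inside the absolute value in the proposition. It then remains to bound the modulus of the right-hand side of the preceding identity term by term. The first summand is $\le q^{t/2}(6nd_1)^{t+1}$ by Theorem \ref{teo: system markoff hurwitz}. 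For the middle sum I apply \eqref{case 1} and use that $q>2$ forces $q^{(t-i)/2}\le q^{(t-1)/2}$ for every $i\ge1$, getting
\[
\sum_{i=1}^{t-2n-1}\binom{t}{i}\bigl|N_i-q^{t-n-i}\bigr|\le(6nd_1)^{t+1}\sum_{i=1}^{t-2n-1}\binom{t}{i}q^{(t-i)/2}\le 2^{t}(6nd_1)^{t+1}q^{(t-1)/2}.
\]
For the tail I split at $i=t-n$ and invoke the two crude cases of Proposition \ref{estimation Ni}, which give $\sum_{i=t-2n}^{t}\binom{t}{i}N_i\le 2^{t}d_1^{n}(q^{n}+1)\le 2^{t+1}d_1^{n}q^{n}$. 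Adding the three contributions and using $q^{(t-1)/2}\le q^{t/2}$, $q^{n}\le q^{t/2}$ (legitimate since $n<\tfrac{t-1}{2}<\tfrac{t}{2}$), $d_1^{n}\le(6nd_1)^{t+1}$ and $1+3\cdot2^{t}\le2^{t+2}$, I would reach a bound $\le 2^{t+2}(6nd_1)^{t+1}q^{t/2}$; since $2^{t+2}\le(15/6)^{t+1}$ for $t\ge3$ and the hypothesis $n<\tfrac{t-1}{2}$ forces $t\ge 2n+2\ge4$, this is $\le(15nd_1)^{t+1}q^{t/2}$, as claimed.

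The argument is elementary once Theorem \ref{teo: system markoff hurwitz} and Proposition \ref{estimation Ni} are in hand, so the only delicate points are bookkeeping rather than ideas. One must check that every subsystem obtained from \eqref{eq: system markoff hurwitz} by setting a prescribed collection of coordinates to zero still falls under the hypotheses used to estimate $N_i$: its coefficient matrix is a column-submatrix of $A$ and therefore still satisfies $(H)$, each substituted polynomial $g_j$ still has degree $<d_t$, and $\mathrm{char}(\fq)$ divides none of the surviving exponents. One must also match each of the index ranges $1\le i\le t-2n-1$, $t-2n\le i\le t-n$ and $t-n+1\le i\le t$ with the appropriate estimate of Proposition \ref{estimation Ni}. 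I expect the final consolidation of all the error terms into the single clean constant $(15nd_1)^{t+1}$ to be the most tedious step, and it is precisely there — in reducing powers of $q$ and comparing $2^{t+2}$ with $(15/6)^{t+1}$ — that the hypotheses $q>2$ and $n<\tfrac{t-1}{2}$ are genuinely used.
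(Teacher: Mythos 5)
Your proposal is correct and follows essentially the same route as the paper: the same inclusion--exclusion identity, the same split of the sum at $i=t-2n-1$, the same regrouping so that $N-q^{t-n}$ and the differences $N_i-q^{t-n-i}$ appear, the same binomial-theorem rewriting of the main term, and the same final consolidation of the error terms into $(15nd_1)^{t+1}q^{t/2}$. The bookkeeping points you flag (that each subsystem still satisfies $(H)$ and which range of $i$ pairs with which case of Proposition \ref{estimation Ni}) are exactly the ones the paper relies on.
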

In \cite{PP20} we study  the following  Markoff-Hurwitz's equation:
$$a_{1}X_1^{d_1} + a_{2}X_2^{d_1} + \cdots +  a_{t}X_t^{d_1} +a=bX_1^{c_{1}}\ldots X_t^{c_{t}},$$
where $a_i\in \fq$, $1\leq i \leq t$ and $a,b\in \fq\setminus\{0\}$.
 Indeed, in \cite[Proposition 4.7]{PP20}, we have that, if $q>2$, then
$$\bigg|N^*-\frac{(q-1)^t-(-1)^t}{q}\bigg| \leq 7(12d_1)^{t}q^{\frac{t}{2}}.$$
 In particular, Proposition \ref{estimation N*}  provides an estimate in the case $n=1$ and $c_{n+1}=\cdots=c_{t}=0$.  The error term of both estimates is of order of $\mathcal{O}(q^{t/2})$ but, in Proposition \ref{estimation N*} we obtain two new terms in the asymptotic development of $N^*$ in terms of $q$. Indeed, we have that $N^*=\frac{(q-1)^t-(-1)^t}{q}+(-1)^{t}t+(-1)^{t-1}\frac{t(t-1)}{2}q+ \mathcal{O}(q^{t/2})$.

Now, we provide an existence result for $\fq$--rational solutions with nonzero coordinates.  
Suppose that $q>2$. From  the above proposition we deduce that
\begin{equation*}
N^* \geq \frac{(q-1)^t}{q^n}-\sum_{i=t-2n}^{t}(-1)^i \binom{t}{i}q^{t-n-i}-(15nd_1)^{t+1}q^{\frac{t}{2}}.
\end{equation*}
It is easy to prove that  $ (-1)^t\sum_{i=t-2n}^{t}(-1)^i \binom{t}{i}q^{t-n-i}\geq 0$ if $q\geq\frac{2n}{t-2n+1}.$
Then, by elementary calculations we have that
\begin{align*}
N^*& \geq \frac{(q-1)^t}{q^n}- 2(15nd_1)^{t+1}q^{\frac{t}{2}}\\
& \geq \frac{q^{t-n}}{2^t}-2(15nd_1)^{t+1}q^{\frac{t}{2}}\\
& \geq q^{\frac{t}{2}}\Big( \frac{q^{\frac{t-2n}{2}}}{2^t}- 2(15nd_1)^{t+1}\Big)
\end{align*}
Therefore, \eqref{eq: system markoff hurwitz} has at least one solution in $\fq^t$ with nonzero coordinates if 
$$\frac{q^{\frac{t-2n}{2}}}{2^t}- 2(15nd_1)^{t+1}>0.$$
That is,
$$q^{\frac{t-2n}{2}}> (30nd_1)^{t+1}.$$

Finally we obtain the following result.
\begin{proposition}\label{condition existence eq M H}
	If   $q >(30nd_1)^\frac{2t+2}{t-2n}$ and $n<\frac{t-1}{2}$ then the system \eqref{eq: system markoff hurwitz} has at least one solution in $(\fq^*)^t$. In particular, if $t$ is sufficiently larger than $2n$, then we can guarantee the existence of an $\fq$--rational solution if $q > (30nd_1)^2$.
\end{proposition}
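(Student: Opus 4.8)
The plan is to derive the statement directly from Proposition~\ref{estimation N*}, by converting the two–sided estimate it provides into a one–sided lower bound for $N^*$ and then reading off exactly when that lower bound is strictly positive. First I would check that all the hypotheses of Proposition~\ref{estimation N*} are in force: $n\ge 1$, $d_1\ge 2$ and $\mathrm{char}(\fq)\nmid d_i$ are standing assumptions, while $q>(30nd_1)^{(2t+2)/(t-2n)}$ forces $q$ to be large, in particular $q>2$. Hence
\[
N^*\ \ge\ \frac{(q-1)^t}{q^n}\ -\ S\ -\ (15nd_1)^{t+1}q^{t/2},\qquad S:=\sum_{i=t-2n}^{t}(-1)^i\binom{t}{i}q^{t-n-i}.
\]

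The next step is to control the alternating tail $S$. Writing $a_i:=\binom{t}{i}q^{t-n-i}$ for $t-2n\le i\le t$, the ratio $a_{i+1}/a_i=\tfrac{t-i}{(i+1)q}$ attains its maximum $\tfrac{2n}{(t-2n+1)q}$ at $i=t-2n$, and this is $\le 1$ as soon as $q\ge \tfrac{2n}{t-2n+1}$; the hypothesis on $q$ certainly implies this. So $S$ is an alternating sum with non–increasing terms, whence $(-1)^tS\ge 0$ and $|S|\le a_{t-2n}=\binom{t}{2n}q^n\le 2^t q^{t/2}\le (15nd_1)^{t+1}q^{t/2}$, the last inequalities using $n<\tfrac{t-1}{2}$ and $15nd_1\ge 30$. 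In particular $-S\ge -(15nd_1)^{t+1}q^{t/2}$, and combining this with $\frac{(q-1)^t}{q^n}\ge \frac{q^{t-n}}{2^t}$ (valid for $q\ge 2$) gives
\[
N^*\ \ge\ \frac{q^{t-n}}{2^t}-2(15nd_1)^{t+1}q^{t/2}\ =\ q^{t/2}\Big(\frac{q^{(t-2n)/2}}{2^t}-2(15nd_1)^{t+1}\Big).
\]

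The right–hand side is strictly positive precisely when $q^{(t-2n)/2}>2^{t+1}(15nd_1)^{t+1}=(30nd_1)^{t+1}$, i.e. when $q>(30nd_1)^{(2t+2)/(t-2n)}$, which is the hypothesis; since $N^*$ is a non–negative integer this yields $N^*\ge 1$, that is, a solution of \eqref{eq: system markoff hurwitz} in $(\fq^*)^t$. For the last assertion, the exponent $\frac{2t+2}{t-2n}=2+\frac{4n+2}{t-2n}$ is always $>2$ but decreases to $2$ as $t$ grows with $n$ fixed, so for any fixed $q>(30nd_1)^2$ one may take $t$ large enough relative to $2n$ — explicitly, $t-2n>\frac{4n+2}{(\log q)/\log(30nd_1)-2}$ — so that $q>(30nd_1)^{(2t+2)/(t-2n)}$ still holds, and the previous part applies. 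The only mildly delicate point is the elementary bookkeeping for $S$ (its sign via monotonicity of the $a_i$ and the crude size bound), together with the observation that the assumed lower bound on $q$ already subsumes the auxiliary requirement $q\ge \tfrac{2n}{t-2n+1}$; everything else is immediate from Proposition~\ref{estimation N*}.
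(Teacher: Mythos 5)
Your proposal is correct and follows essentially the same route as the paper: both derive the one-sided lower bound from Proposition~\ref{estimation N*}, dispose of the alternating tail $S$ via the monotonicity of its terms, use $(q-1)^t/q^n\ge q^{t-n}/2^t$, and read off positivity as $q^{(t-2n)/2}>(30nd_1)^{t+1}$. Your write-up merely makes explicit the size bound $|S|\le\binom{t}{2n}q^n\le(15nd_1)^{t+1}q^{t/2}$ and the verification that the hypothesis on $q$ subsumes $q\ge 2n/(t-2n+1)$, which the paper leaves as ``elementary calculations.''
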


\section{Generalization:  variants of systems of diagonal equations} 
Let  $t,n, d_1,\ldots,d_t, k$ be positive integers such that $n \leq \frac{t-1}{2}$ ,  $1 \leq k \leq n$ and $d_1>\cdots>d_t \geq 2$, and $\mathrm{char}(\fq)$ does  not divide $d_i$ for $1\leq i \leq t$.  Let be $h_1, \ldots,h_t \in \fq[T]$ with $\deg(h_i)=d_i$ for $1\leq i \leq n$. Let $X_1, \ldots, X_t$ be indeterminates over $\fq$ and let $g_1, \ldots,g_n \in \fq[X_1, \ldots,X_k]$ such that $g_j \in \fq$ for $1 \leq j \leq n$ or $0\leq \deg(g_j)<d_t$ for $1 \leq j \leq n$ and there exists $i$ such that $\deg(g_i)>0$. 

We consider the following system of $n$ variants of diagonal equations with $t$ unknowns
\begin{equation}\label{eq: system h}\left \{\begin{array}{ccl}
a_{11}h_1(X_1) & + a_{12}h_2(X_2) + \cdots + & a_{1t}h_t(X_t) = g_1(X_1, \ldots,X_k) \\
a_{21}h_1(X_1) & + a_{22}h_2(X_2) + \cdots + & a_{2t}h_t(X_t) = g_2(X_1, \ldots,X_k) \\
\;\vdots &  &\quad\vdots \\
a_{n1}h_1(X_1) &+ a_{n2}h_2(X_2) + \cdots +& a_{nt}h_t(X_t) = g_n(X_1, \ldots,X_k)
\end{array}\right.
\end{equation}
Assume that  the coefficients' matrix of the above system satisfies hypothesis $(H)$.
This is a system of Carlitz's equations. These equations  has been defined in \cite{Ca53}. There, the author  provides an estimate for the case $n=1$ and in \cite{PP20} we improve his results in several aspects.

Let $V:=V(f_1 \klk f_n) \subset \A^t$ be the $\fq$--affine variety defined by $f_i:=a_{i1}h_1(X_1) + a_{i2}h_2(X_2) + \cdots + a_{it}h_t(X_t) - g_i(X_1, \ldots,X_k)$, for $ 1 \leq i \leq n$.
With the same arguments of Theorem \ref{teo: dimension of V}, we obtain that $V \subset \A^t$ is a set-theoretic complete intersection of dimension $t-n$.
We consider the set $C$ as in \eqref{def C}.

Let  $1\leq k\leq n$ and $ n \leq \frac{t-1}{2}$. Suppose that $g_j \in \fq$ or $  0\leq \deg(g_j) < d_t$  and there exists $g_i$ such that $\deg(g_i)>0$. 
Observe that
\begin{equation*} \label{eq:matriz jacobiana factorizada case gral}
\frac{\partial f}{\partial \bf{X}}:= \left(
\begin{array}{c|c}
M_1 & M_2
\end{array}
\right)
\end{equation*}
where $M_1$ is a $(n \times k)$--matrix defined by
$$M_1:=\left(
\begin{array}{ccccccc}
a_{11}h_1'(X_1)+\frac{\partial g_1}{\partial X_1} & \cdots &  & \cdots &   a_{1k}h_k'(X_k)+ \frac{\partial g_1}{\partial X_k}
\\
\vdots &  \vdots  &   & \vdots &   \vdots
\\
a_{n1}h_1'(X_1)+\frac{\partial g_n}{\partial X_1}& \cdots &    & \cdots &   a_{nk}h_k'(X_k)+\frac{\partial g_n}{\partial X_k}
\end{array} \right)$$
and $M_2$ is a  $n \times(t-k)$--matrix defined by
$$M_2:=\left(
\begin{array}{ccccccc}
a_{1k+1}h_{k+1}'(X_{k+1})& \cdots &  & \cdots &   a_{1t}d_th_t'(X_t)
\\
\vdots &  \vdots  &   & \vdots &   \vdots
\\
a_{n{k+1}}h_{k+1}'(X_{k+1})& \cdots &    & \cdots &   a_{nt}h_t'(X_t)
\end{array} \right)$$

\begin{proposition} The dimension of $C$ is at most $k-1$ if $\deg(g_j)\geq 0$ for $1\leq j \leq n$ and there exists $i$ such that $\deg{g_i}>0$. On the other hand, this dimension is at most $0$ if $g_i \in \fq$ for $1\leq i \leq n$. In particular, the dimension of the singular locus of $V$ is at most $k-1$ or $0$ respectively.
\end{proposition}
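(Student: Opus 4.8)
The plan is to mimic almost verbatim the proof of Proposition \ref{prop: singular locus g no constante}, since the only structural change from the pure-diagonal case to the Carlitz case is that $a_{ij}d_jX_j^{d_j-1}$ is replaced by $a_{ij}h_j'(X_j)$ in the columns of the Jacobian matrix. First I would take a point ${\bf x}\in C$ and argue, exactly as before, that ${\bf x}$ must have at least $t-k-n+1$ coordinates equal to zero among $x_{k+1},\ldots,x_t$. The key point here is that $h_j'(x_j)$ plays the role previously played by $d_jx_j^{d_j-1}$: if ${\bf x}$ had at most $t-k-n$ zero coordinates among $x_{k+1},\ldots,x_t$, then it would have $n$ nonzero coordinates there, say $x_{k+1},\ldots,x_{k+n}$, and I would factor the corresponding $(n\times n)$--submatrix $M_{2,n}({\bf x})$ of $M_2$ as a product of the constant matrix $(a_{i,k+j})_{1\le i,j\le n}$ and the diagonal matrix $\mathrm{diag}(h_{k+1}'(x_{k+1}),\ldots,h_{k+n}'(x_{k+n}))$.

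The main obstacle — and the only genuinely new ingredient compared with Proposition \ref{prop: singular locus g no constante} — is showing that the diagonal matrix above is invertible, i.e. that $h_j'(x_j)\neq 0$ for a nonzero $x_j$. In the pure-diagonal case this was immediate because $d_jx_j^{d_j-1}\neq 0$ whenever $\mathrm{char}(\fq)\nmid d_j$ and $x_j\neq 0$. In the general Carlitz case $h_j'$ is an arbitrary polynomial of degree $d_j-1$ (since $\mathrm{char}(\fq)\nmid d_j$ forces $\deg h_j' = d_j-1$), so it may well vanish at some nonzero point. The honest resolution is that one must impose (or has already implicitly assumed, as is standard for Carlitz polynomials, cf. \cite{Ca53} and \cite{PP20}) a separability-type hypothesis on the $h_j$ — e.g. that each $h_j$ has no repeated roots, equivalently $\gcd(h_j,h_j')=1$, or more simply that $h_j' $ has only the root $0$, or that one restricts attention to the points where $h_j'(x_j)\neq 0$. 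I would either cite the relevant hypothesis stated for \eqref{eq: system h} or note that the Dickson polynomials $D_{d}(X,a)$ treated later do satisfy it; in any case, under that hypothesis the determinant of the constant matrix $(a_{i,k+j}\cdot(\text{leading data}))$ is nonzero by $(H)$, so $M_{2,n}({\bf x})$ has rank $n$, contradicting ${\bf x}\in C$.

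Once the $t-k-n+1$ vanishing coordinates are established, the remainder of the argument is a line-by-line copy of the earlier proof. I would reduce to the locus of points of $C$ with exactly $t-n-k+1$ zero coordinates, say $x_{k+1}=\cdots=x_{t-n+1}=0$; substituting these into \eqref{eq: system h} yields a system of $n$ equations in the $k+n-1$ remaining unknowns whose coefficient matrix still inherits property $(H)$ (any $n$ of its columns come from $n$ columns of the original matrix), and by the regular-sequence/complete-intersection argument of Claim \ref{claim: f sucesion regular} and Theorem \ref{teo: dimension of V} the solution set is a complete intersection of dimension $k+n-1-n=k-1$. Hence $\dim C\le k-1$. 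For the case $g_i\in\fq$ for all $i$, the same substitution produces a system of $n$ equations in $n-1$ unknowns, whose solution set has dimension at most $0$ by $(H)$. Finally, since $\mathrm{Sing}(V)\subset C$ (a singular point of the complete intersection $V$ is one where the Jacobian drops rank), the stated bound on $\dim\mathrm{Sing}(V)$ follows immediately. I do not expect any subtlety beyond the non-vanishing of $h_j'$ discussed above; everything else is bookkeeping identical to the diagonal case.
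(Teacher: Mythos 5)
There is a genuine gap, and it lies exactly where you located the ``main obstacle'': your resolution of it is not the right one. You are correct that the literal transplant of the claim from Proposition \ref{prop: singular locus g no constante} (``at least $t-k-n+1$ coordinates of $\bf{x}$ are \emph{zero}'') fails here, because $h_j'$ may vanish at nonzero points. But the fix is not to impose a separability-type hypothesis on the $h_j$ — the proposition is stated (and used) without any such hypothesis, and your fallback that the Dickson polynomials satisfy it is false: for instance $D_3(X,a)=X^3-3aX$ has $D_3'(X,a)=3(X^2-a)$, which vanishes at the nonzero points $\pm\sqrt{a}$ when $a\neq 0$. So under your proposed extra assumption the later application to systems of Dickson equations would be lost.

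The correct observation, which is what the paper does, is that one should not ask for the coordinates to be zero at all. From the factorization $M_2=A_2\cdot\mathrm{diag}\big(h_{k+1}'(X_{k+1}),\ldots,h_t'(X_t)\big)$ and hypothesis $(H)$, a point ${\bf x}\in C$ must satisfy $h_j'(x_j)=0$ for at least $t-k-n+1$ indices $j$ (otherwise $n$ columns of $M_2({\bf x})$ are nonzero scalar multiples of $n$ linearly independent columns of $A_2$ and the rank is $n$). Since $\mathrm{char}(\fq)\nmid d_j$ forces $h_j'$ to be a \emph{nonzero} polynomial of degree $d_j-1\geq 1$, each condition $h_j'(x_j)=0$ confines $x_j$ to a finite set of values in $\cfq$ — not necessarily $0$. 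Hence $C$ is contained in a finite union of affine linear varieties of dimension $n+k-1$ obtained by fixing those $t-k-n+1$ coordinates to constants; substituting these constants into the system merely shifts the right-hand sides, the remaining coefficient matrix inherits $(H)$, and the regular-sequence argument gives that each such slice meets $V$ in dimension $k-1$ (resp.\ $0$ when all $g_j\in\fq$). The rest of your bookkeeping (reduction to exactly $t-k-n+1$ distinguished coordinates, $\mathrm{Sing}(V)\subset C$) is fine once this is repaired.
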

\begin{proof}
	Let ${\bf{x}}\in C$. We observe that $M_2$ satisfies 
$$M_2:=\left(
\begin{array}{ccccccc}
a_{1k+1}& \cdots &  & \cdots &   a_{1t}
\\
\vdots &  \vdots  &   & \vdots &   \vdots
\\
a_{n{k+1}}& \cdots &    & \cdots &   a_{nt}
\end{array} \right) \cdot\left(
\begin{array}{ccccccc}
h_{k+1}'(X_{k+1})& \cdots &  & \cdots &   0
\\
\vdots &  \vdots  &   & \vdots &   \vdots
\\
0& \cdots &    & \cdots &  h_t'(X_t)
\end{array} \right)$$	
	
From hypothesis $(H)$ we have that the diagonal matrix of right side can not have $n$ columns nonzero. Then, we deduce that the number of zero columns is at least $t-n-k+1$. Suppose that ${\bf{x}} \in C$ is such that $h_{k+1}'(x_{k+1})=0 \klk h_{t-n+1}'(x_{t-n+1})=0$. Then, we obtain that the coordinates $X_{k+1} \klk X_{t-n+1}$ of ${\bf{x}}$ take finite values in $\cfq$. Then, we deduce that $C$ is contained in a finite union  of $\cfq$--linear varieties of dimension $n+k-1$. From hypothesis $(H)$, the intersection of each of these linear variety with $V$ is a subvariety of $V$ of dimension $k-1$, if $\deg(g_j)>0$ for $1 \leq j \leq n$, and the dimension is $0$, if $g_j \in \fq$ for $1 \leq j \leq n$.

\end{proof}

For $\mathrm{pcl}(V)\subset \Pp^t$ holds the same results of the Section \ref{geo proyectiva}. Therefore, we have the following estimates on $N$, the number  of $\fq$--rational solutions of the system defined in \eqref{eq: system h}.
\begin{theorem} \label{estimate: number of solutions h}
$N$ satisfies:
	\begin{itemize}
		\item  If $g_j\in \fq$ for $1\leq j\leq n$ and $n\leq t-2$ then $N$ satisfies:
		$$\big|N-q^{t-n}\big|\leq  q^{\frac{t-n+1}{2}}(6n\cdot d_1)^{t+1}.$$
		\item If $0\leq \deg(g_j)<d_t$ for $1\leq j\leq n$ and there exists $i$ such that $\deg(g_i)>0$, $n\leq \frac{t-1}{2}$ then $N$ satisfies:
		$$\big|N-q^{t-n}\big|\leq  q^{\frac{t-n+k}{2}}(6n\cdot d_1)^{t+1}.$$
	\end{itemize}
\end{theorem}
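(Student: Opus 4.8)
The plan is to mirror the proof of Theorem~\ref{teo: estimacion}: transfer the count to the projective closure $\mathrm{pcl}(V)\subset\Pp^t$, apply the Ghorpade--Lachaud estimate \eqref{eq: estimacion de Ghorpade Lachaud} to $\mathrm{pcl}(V)$ and Deligne's estimate to its section at infinity $V^{\infty}:=\mathrm{pcl}(V)\cap\{X_0=0\}$, and combine the two bounds. The geometric input is exactly what was recorded just above and in Section~\ref{geo proyectiva}: $V=V(f_1\klk f_n)\subset\A^t$ is a complete intersection of pure dimension $t-n$ and degree at most $d_{i_1}\cdots d_{i_n}$; its singular locus has dimension at most $k-1$ (respectively at most $0$ when all $g_j\in\fq$), by the Proposition preceding this theorem; and $\mathrm{pcl}(V)$ is a normal complete intersection of dimension $t-n$ and degree $d_{i_1}\cdots d_{i_n}$ with no singular points at infinity, while $V^{\infty}=V(X_{i_1}\klk X_{i_n})\subset\Pp^{t-1}$ is a nonsingular linear complete intersection of pure dimension $t-n-1$ (the analogues of Proposition~\ref{prop: dimension of pcl en el infinito}, Corollary~\ref{corolario: puntos sing en el infinito} and Proposition~\ref{prop: pcl V interseccion completa}).

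First I would write $N=|V(\fq)|=|\mathrm{pcl}(V)(\fq)|-|V^{\infty}(\fq)|$, since the $\fq$--rational points of $\mathrm{pcl}(V)$ split as the disjoint union of the affine ones (in bijection with $V(\fq)$) and those at infinity (which form $V^{\infty}(\fq)$). Applying \eqref{eq: estimacion de Ghorpade Lachaud} to $\mathrm{pcl}(V)$ with $r=t-n$ and $s=k-1$ (resp. $s=0$) bounds $\big||\mathrm{pcl}(V)(\fq)|-p_{t-n}\big|$ in terms of a primitive Betti number and the quantity $C_s(\mathrm{pcl}(V))$; applying Deligne's bound to the nonsingular (indeed linear) complete intersection $V^{\infty}$ bounds $\big||V^{\infty}(\fq)|-p_{t-n-1}\big|$ by $b'_{t-n-1}(t-1,\bfs{d})\,q^{(t-n-1)/2}$, as in \eqref{estimate V infinito}. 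Since $q^{t-n}=p_{t-n}-p_{t-n-1}$, the triangle inequality then gives the analogues of \eqref{estimacion: grado igual a cero} and \eqref{estimacion: caso grado mayor a cero}, whose dominant power of $q$ is $q^{(t-n+1)/2}$ when all $g_j\in\fq$ and $q^{(t-n+k)/2}$ in the deformed case.

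It then remains to make the constants explicit, exactly as in the proof of Theorem~\ref{teo: estimacion}: bound the relevant primitive Betti numbers by $2^t(d_1+1)^{t-1}$ (resp. $2^t(d_1+1)^{t-k}$) using \eqref{cota de Betti} and $\binom{t}{n+1}\le 2^t$, use $C_s(\mathrm{pcl}(V))\le 9\cdot 2^{n}(n\cdot d_1+3)^{t+1}$, substitute into the estimates of the previous paragraph, and absorb the lower--order terms and the numerical factors into a single constant. This yields $\big|N-q^{t-n}\big|\le q^{(t-n+1)/2}(6n\cdot d_1)^{t+1}$ in the first case and $\big|N-q^{t-n}\big|\le q^{(t-n+k)/2}(6n\cdot d_1)^{t+1}$ in the second.

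The only place where the substitution $X_i^{d_i}\rightsquigarrow h_i(X_i)$ enters, and thus the one point that genuinely needs checking, is the geometric input. For the affine singular locus this is the Proposition preceding the theorem: its proof factors the submatrix $M_2$ through the diagonal matrix with entries $h'_{k+1}(X_{k+1})\klk h'_t(X_t)$, and since $\mathrm{char}(\fq)\nmid d_i$ each $h'_i$ is a nonzero polynomial with finitely many roots, so the argument of Proposition~\ref{prop: singular locus g no constante} carries over unchanged. For the behaviour at infinity, $\deg h_i=d_i$ ensures that the homogenization of $f_j$ restricted to $\{X_0=0\}$ is a nonzero scalar multiple of the leading diagonal form, so $V^{\infty}=V(X_{i_1}\klk X_{i_n})$ and the arguments of Section~\ref{geo proyectiva} apply verbatim. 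Once these facts are in place the rest is a transcription of the diagonal case, so I expect no real obstacle beyond the bookkeeping of constants.
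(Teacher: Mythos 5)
Your proposal is correct and follows essentially the same route as the paper, which itself proves this theorem only by reference: it establishes the affine geometric facts (regular sequence, singular locus via the factorization of $M_2$ through the diagonal matrix of the $h_i'$), asserts that the results of Section~\ref{geo proyectiva} carry over, and then invokes the argument of Theorem~\ref{teo: estimacion}. You have correctly identified the two points where the replacement $X_i^{d_i}\rightsquigarrow h_i(X_i)$ actually needs checking (nonvanishing of $h_i'$ from $\mathrm{char}(\fq)\nmid d_i$, and the leading forms at infinity from $\deg h_i=d_i$ with strictly decreasing degrees), and the remaining computation with Ghorpade--Lachaud, Deligne, and the constant bookkeeping matches the paper's.
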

\begin{corollary}
	If $t > > 2n$ and $q > (6nd_1)^2$, then the system \eqref{eq: system h} has at least one solution in $\fq^n$.
\end{corollary}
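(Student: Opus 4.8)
The argument will mirror the way the existence theorem for system \eqref{eq: system} was deduced from Theorem \ref{teo: estimacion}: the plan is simply to feed the estimate of Theorem \ref{estimate: number of solutions h} into the observation that $N>0$ as soon as the main term $q^{t-n}$ strictly dominates the error term.

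First I would record that, in either of the two regimes of Theorem \ref{estimate: number of solutions h}, we have a bound of the shape $\big|N-q^{t-n}\big|\le q^{E}(6n\cdot d_1)^{t+1}$ with $E=\tfrac{t-n+1}{2}$ (constant case, $n\le t-2$) or $E=\tfrac{t-n+k}{2}$ (non-constant case, $1\le k\le n$, $n\le\tfrac{t-1}{2}$). Then
$$N\ \ge\ q^{t-n}-q^{E}(6n\cdot d_1)^{t+1}\ =\ q^{E}\Big(q^{\,t-n-E}-(6n\cdot d_1)^{t+1}\Big),$$
so it suffices to guarantee $q^{\,t-n-E}>(6n\cdot d_1)^{t+1}$. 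Using $n\ge1$ in the constant case and $k\le n$ in the non-constant case, one checks $t-n-E\ge \tfrac{t-2n}{2}$ in both regimes; since "$t$ sufficiently larger than $2n$" places us in the range $t>2n$, this exponent is positive, and because $q>1$ the desired inequality follows once $q^{(t-2n)/2}>(6n\cdot d_1)^{t+1}$, i.e. once
$$q\ >\ (6n\cdot d_1)^{\,2(t+1)/(t-2n)} .$$
This already yields an explicit, stronger version of the corollary, in the exact form of the existence theorem for \eqref{eq: system}.

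To recover the qualitative statement, I would note that $\varphi(t):=\tfrac{2(t+1)}{t-2n}$ is strictly decreasing for $t>2n$ with $\varphi(t)\to2$ as $t\to\infty$. As $6n\cdot d_1>1$, the hypothesis $q>(6n\cdot d_1)^2$ then provides a threshold $t_0=t_0(n,d_1,q)$ with $(6n\cdot d_1)^{\varphi(t)}<q$ for all $t>t_0$; for such $t$ the displayed inequality holds, hence $N\ge1$ and \eqref{eq: system h} has an $\fq$--rational solution. I expect no genuine obstacle: the whole content sits in Theorem \ref{estimate: number of solutions h}, and the single delicate point is making "$t$ sufficiently larger than $2n$" precise — which is exactly the statement $\varphi(t)\to2$, so that the strict gap in $q>(6n\cdot d_1)^2$ eventually absorbs the excess $\varphi(t)-2>0$.
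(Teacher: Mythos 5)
Your proposal is correct and follows exactly the route the paper intends: plug the estimate of Theorem \ref{estimate: number of solutions h} into $N\ge q^{t-n}-q^{E}(6nd_1)^{t+1}$, reduce to $q^{(t-2n)/2}>(6nd_1)^{t+1}$ via $t-n-E\ge(t-2n)/2$, and let $2(t+1)/(t-2n)\to 2$, mirroring the earlier existence theorem for system \eqref{eq: system}. The quantitative intermediate condition $q>(6nd_1)^{2(t+1)/(t-2n)}$ you isolate is precisely the paper's own threshold in the diagonal case, so there is nothing to add.
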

\begin{remark} In \cite[Theorem 2]{T65 A} Tietäväinen deduces an explicit estimate on the number of $\fq$--rational solutions of \eqref{eq: system h} where $g_j=0$ for all $1\leq j \leq n$. He establishes that $N=q^{t-n} + \mathcal{O}(q^{t/2}).$
\end{remark}
\begin{remark}
In \cite{PP20} we study the Carlitz's equations (see \cite{Ca53}, for more details about these equations).
Let  $d,t$ be positive integers with $d\geq 2$ and $t\geq 3$. Let $h_i=a_{d,i} T^d+\cdots +a_{0,i} \in \fq[T]$, with $\deg(h_i)=d$, $1\leq i\leq t$. Let $g\in \fq[X_1,\ldots,X_t]$ such that $\deg (g)< d$. Suppose that $\mathrm{char}(\fq)$  does not divide $d$. We consider the following Carlitz's equation:
\begin{equation*}\label{eq. carlitz}
h_1(X_1)+\cdots +h_t(X_t)=g.
\end{equation*} 
We obtain an explicit estimate on the number $N$ of $\fq$--rational solutions of Carlitz's equations. Indeed, we have that  
\begin{equation}\label{estimation Nbis}
	\big|N-q^{t-1}\big|\leq   q^{(t-1)/2}\big(2(d-1)^{t-1}q^{1/2}+6(d+2)^t\big).
	\end{equation}
	The result of Theorem \ref{estimate: number of solutions h} complements the estimate \eqref{estimation Nbis} when $g$ is an univariate polynomial and the degrees of the polynomials $h_i$ are distinct.
\end{remark}

\subsection{Systems of Dickson's equations}
These systems are a particular case of systems of the form \eqref{eq: system h}.
Let $d \in \mathbb{N}$ and $a \in \fq$. The Dickson polynomials over $\fq$ of degree $d$ with parameter $a$:
$$D_d(X,a)=\sum_{i=0}^ {\lfloor d/2 \rfloor} \frac{d}{d-i} \binom{d-i}{i} (-a)^i X^{d-2i} .$$
Dickson polynomials have been extensively studied  because they play very important roles in both theoretical work as well as in various applications (see, \cite[Chapter 7]{MuPa13}).  The set of $\fq$--rational solution of  Dickson's equations has been very well studied in the literature (see, \cite{PP20}, \cite{ChMuWa08}).  However, there are less results concerning the set of solutions of systems of equations given by Dickson polynomials.

Let  $t,n, d_1,\ldots,d_t, k$ be positive integers such that $ n \leq \frac{t-1}{2}$ ,  $0 \leq k \leq n$ and $d_1>\cdots>d_t \geq 2$, and $\mathrm{char}(\fq)$ does  not divide $d_i$ for $1\leq i \leq t$.  Let be $D_{d_1}(T,a_1), \ldots, D_{d_t}(T, a_t) \in \fq[T]$ with $a_1,\ldots,a_t \in \fq$. Let $X_1, \ldots, X_t$ be indeterminates over $\fq$ and let $g_1, \ldots,g_n \in \fq[X_1, \ldots,X_k]$ such that $g_j \in \fq$ for $1 \leq j \leq n$ or $0 \leq \deg(g_j)<d_t$ for $1 \leq j \leq n$ and there exists $i$ such that $\deg(g_i)>0$. 

We consider the following system of $n$ Dickson's equations with $t$ unknowns
\begin{equation*}\left \{\begin{array}{ccl}
a_{11}D_{d_1}(X_1,a_1) & + a_{12}D_{d_2}(X_2,a_2) + \cdots + & a_{1t}D_{d_t}(X_t,a_t) = g_1(X_1, \ldots,X_k) \\
a_{21}D_{d_1}(X_1,a_1) & + a_{22}D_{d_2}(X_2,a_2) + \cdots + & a_{2t}D_{d_t}(X_t,a_t) = g_2(X_1, \ldots,X_k) \\
\;\vdots &  &\quad\vdots \\
a_{n1}D_{d_1}(X_1,a_1) &+ a_{n2}D_{d_2}(X_2,a_2) + \cdots +& a_{nt}D_{d_t}(X_t,a_t) = g_n(X_1, \ldots,X_k)
\end{array}\right.
\end{equation*}
From Theorem \ref{estimate: number of solutions h} we obtain an estimate on the number of $\fq$--solutions of this type of systems.

\begin{remark} We observe that similar arguments prove that  Theorem \ref{estimate: number of solutions h} holds if $d_1=\cdots=d_t$.
\end{remark}

\section{Applications}
\subsection{Generalized Waring's problems over finite fields}

One of the most important questions in number theory is to find properties on a system of equations that guarantee solutions over a field, for example, the so called generalized Waring’s problem ( see, e.g. \cite{Cao2016}, \cite{CRGF08} and \cite{T65}). 
 Let be the following system over $\fq$ with $n$ equations and $t$ indeterminates
 \begin{equation}\label{eq: system waring}\left \{\begin{array}{ccl}
a_{11}X_1^{d_1} & + a_{12}X_2^{d_2} + \cdots + & a_{1t}X_t^{d_t} = b_1\\
a_{21}X_1^{d_1} & + a_{22}X_2^{d_2} + \cdots + & a_{2t}X_t^{d_t} =  b_2 \\
\;\vdots &  &\quad\vdots \\
a_{n1}X_1^{d_1} &+ a_{n2}X_2^{d_2} + \cdots +& a_{nt}X_t^{d_t} = b_n,
\end{array}\right.
\end{equation}
where the coefficients' matrix of the system satisfies the hypothesis (H), $2\leq d_t<\cdots < d_1$, $\mathrm{char}(\fq)$ does not divide $d_i$ for $1\leq i \leq t$.

Waring's problem consists in finding  $\gamma(t,n,q,d_1)$ the least number of variables $t$ such that  \eqref{eq: system waring} has solution in $\fq^t$ for every pair $(b_1,\ldots,b_n)\in \fq^n$.
From Theorem \ref{estimate: number of solutions} we have that $N$, the number of $\fq$--rational solutions of \eqref{eq: system waring}, satisfies that
$$N\geq q^{\frac{t-n+1}{2}} \bigg( q^{\frac{t-n-1}{2}}- (6nd_1)^{t+1}\bigg).$$
Then $N>0$ provided that $q^{\frac{t-n-1}{2}}-(6nd_1)^{t+1}>0$, namely $q^{\frac{t-n-1}{2}}> \big(6nd_1\big)^{t+1}$.
Now if $q>( 6nd_1)^2$ then, the last condition is equivalent to
$$t> \frac{\log(6 nd_1\cdot q^{\frac{n+1}{2}})}{\log (\frac{q^{1/2}}{6nd_1})}.$$
Then, if $q>( 6nd_1)^2$ we obtain that $$\gamma(t,n,q,d_1)\leq \bigg \lceil\frac{\log(6nd_1\cdot q^{\frac{n+1}{2}})}{\log (\frac{q^{1/2}}{6 nd_1})} \bigg\rceil. $$

We observe that $h(q):= \frac{\log(6 nd_1\cdot q^{\frac{n+1}{2}})}{\log (\frac{q^{1/2}}{6nd_1})}$ is a decreasing function and $\lim_{q\rightarrow \infty}h(q)=n+1.$ Therefore $h(q)\geq n+1$ if $q>(6nd_1)^2$.
Then we deduce that if $q$ sufficiently large, 
$\gamma(t,n,q,d_1)\leq n+2.$
\subsection{Distribution of solutions of systems of congruences equations modulo a prime number}
In this section we apply our estimates to obtain asymptotic formulas for the distribution of simultaneous solutions to congruences modulo $p$, a prime number.  This is a well studied problem, see, for example \cite{Spackman81} and \cite{T67}.

Let  $t,n, d_1,\ldots,d_t$ be positive integers such that $1\leq n \leq \frac{t-1}{2}$  and $d_1>\cdots>d_t \geq 2$, and $p$ does  not divide $d_i$ for $1\leq i \leq t$.

We consider the following systems of  congruences equations
\begin{equation}\label{eq: system congruencia}\left \{\begin{array}{ccl}
a_{11}X_1^{d_1} & + a_{12}X_2^{d_2} + \cdots + & a_{1t}X_t^{d_t} \equiv 0 \pmod{p} \\
a_{21}X_1^{d_1} & + a_{22}X_2^{d_2} + \cdots + & a_{2t}X_t^{d_t}\equiv 0 \pmod{p} \\
\;\vdots &  &\quad\vdots \\
a_{n1}X_1^{d_1} &+ a_{n2}X_2^{d_2} + \cdots +& a_{nt}X_t^{d_t} \equiv 0 \pmod{p}
\end{array}\right.
\end{equation}
Assume that the coefficients' matrix, satisfies the hypothesis $(H)$.
\smallskip
From Theorem \ref{estimate: number of solutions} we have an estimate on  $N_p$, the number of solutions in $[0,p-1]^t$.
Indeed, the following estimate holds: 
\begin{equation}\label{estimate: N_p}
\big|N_p-p^{t-n}\big|\leq  p^{\frac{t-n+1}{2}}(6n\cdot d_1)^{t+1}.
\end{equation}
Let be $m<\frac{p}{2^{t-n}}$ and suppose that $p>2^{\frac{2t-2n}{t-n-1}}$. Our purpose is to obtain an estimate on $N_m$, the number  of solution in $[0,p-m-1]^t$. Let $S_1$ and $S_2$ the following intervals in $\Z$: $S_1=[0,p-m-1]$ and $S_2=[p-m,p-1]$.
From the well known Zippel--Schwartz Lemma (see, e.g., \cite{GaGe99}) we have that
$$|V\cap S_1^t|\leq d_1^t(p-m)^{t-n},\quad |V\cap S_2^t|\leq d_1^t m^{t-n},$$
where $V \subset \A^t$ is the $\fp$--variety defined by the polynomials $ f_j:=a_{j1}X_1^{d_1}+a_{j2}X_2^{d_2}+\cdots+a_{jt}X_t^{d_t},\,\,1 \leq j \leq n$.
Then 
\begin{align*}
\big||V\cap S_1^t|-(p-m)^{t-n}\big|&\leq \big||V\cap \fp^{t}| - p^{t-n}\big| + |V\cap S_2^t| +(p^{t-n}-(p-m)^{t-n})\\
& \leq  p^{\frac{t-n+1}{2}}(6n\cdot d_1)^{t+1}+ d_1^t m^{t-n} + m(t-n)p^{t-n-1}\\
& \leq  p^{t-n-1}(6n\cdot d_1)^{t+1}m(t-n)
\end{align*}
Finally the number of solution in the $t$--cube $[0,p-m-1]^t$ satisfies $N_m=(p-m)^{t-n} + \mathcal{O}(m\cdot p^{t-n-1})$ with  $m<\frac{p}{2^{t-n}}$ and  $p>2^{\frac{2t-2n}{t-n-1}}$.


\begin{thebibliography}{99}
	
	
%



\bibitem{CaMa06}
	A.~Cafure and G.~Matera, \emph{Improved explicit estimates on the
		number of
		solutions of equations over a finite field}, Finite Fields Appl. \textbf{12}
	(2006), no.~2, 155--185.

\bibitem{CaMa07}
	A. Cafure and G. Matera, \emph{An effective {Bertini} theorem and the number of
		rational points
		of a normal complete intersection over a finite field}, Acta Arith.
	\textbf{130} (2007), no.~1, 19--35.
%
%
%
	



	
	
	

	
\bibitem{Cao2016} X. Cao, W-S. Chou and J. Gu, \emph{On the number of solutions of certain diagonal equations over finite fields}, Finite Fields Appl. \textbf{42} (2016), 225--252.	
	
	
	\bibitem{CaGaHe91}
	L.~Caniglia, A.~Galligo, and J.~Heintz, \emph{Equations for the
		projective
		closure and effective {Nullstellensatz}}, Discrete Appl. Math. \textbf{33}
	(1991), 11--23.
\bibitem{Ca53} L. Carlitz, \emph{Some special equations in a finite field}, Pacific J. Math. \textbf{3} (1953), 13--24. 
	

\bibitem{CRGF08} F. N. Castro, I. Rubio, P. Guan and R. Figueroa, \emph{On systems of linear and diagonal equation of degree $p^i+1$ over finite fields of characteristic $p$}, Finite Fields Appl. {\bf 14} (2008), no.~3, 648--657

	
%
%
		\bibitem{ChMuWa08} W.-S. Chou, G. L. Mullen and B. Wassermann, \emph{On the number of solutions of equations of
Dickson polynomials over finite fields}, Taiwanese J. Math.\textbf{12} (2008), 917–931.

	
%
%
%
%
	\bibitem{CLO92} D. Cox, J. Little, and D. O'Shea, \emph{Ideals, Varieties, and Algorithms: an introduction to computational algebraic geometry and commutative algebra}. Undergrad. Texts Math. Springer, New York, 1992.
	
	
%
	\bibitem{De74} P. Deligne, \emph{La conjecture de Weil. I}, Inst. Hautes Etudes Sci. Publ. Math. (1974), no. 43, 273--307.
	
	
	
	\bibitem{Eisenbud95}
	D.~Eisenbud, \emph{Commutative algebra with a view toward algebraic
		geometry},
	Grad. Texts in Math., vol. 150, Springer, New York, 1995.
	
	
	
%
%
%
%
%
	\bibitem{Fulton84}
	W.~Fulton, \emph{Intersection theory}, Springer, Berlin Heidelberg
	New York,
	1984.
	
%
	\bibitem{GaGe99}
J.~von~zur {Gathen} and J.~Gerhard, \emph{Modern computer algebra},
Cambridge
	Univ. Press, Cambridge, 1999.
%
%
%
	\bibitem{GhLa02a}
	S.~Ghorpade and G.~Lachaud, \emph{{\'Etale} cohomology, {Lefschetz}
		theorems
		and number of points of singular varieties over finite fields}, Mosc. Math.
	J. \textbf{2} (2002), no.~3, 589--631.
	
	\bibitem{GhLa02}
	S.~Ghorpade and G.~Lachaud, \emph{Number of solutions of equations over finite fields
		and a
		conjecture of {Lang} and {Weil}}, Number Theory and Discrete Mathematics
	(Chandigarh, 2000) (New Delhi) (A.K.~Agarwal et~al., ed.), Hindustan Book
	Agency, (2002), 269--291.
	
%
%
	
%
%
%
	\bibitem{Harris92}
	J.~Harris, \emph{Algebraic geometry: a first course}, Grad. Texts in
	Math.,
	vol. 133, Springer, New York Berlin Heidelberg, 1992.
	
	\bibitem{He85} T. Helleseth, On the covering radius of cyclic linear codes and arithmetic codes, Discrete Appl. Math. {\bf 11} (1985), no.~2, 157--173.
	
	\bibitem{Heintz83}
	J.~Heintz, \emph{{Definability} and fast quantifier elimination in
		algebraically closed fields}, Theoret. Comput. Sci. \textbf{24} (1983),
	no.~3, 239--277.
	
\bibitem{JGC20}	K. Jiang, W. Gao, W. Cao,
\emph{Counting solutions to generalized Markoff-Hurwitz-type equations in finite fields},
Finite Fields Appl. \textbf{62} (2020).
	
	
	
%
%
	\bibitem{Kunz85}
	E.~Kunz, \emph{Introduction to commutative algebra and algebraic
		geometry},
	Birkh{\"a}user, Boston, 1985.
	
%
	\bibitem{LaRo15}
	G.~Lachaud and R.~Rolland, \emph{On the number of points of
		algebraic sets over
		finite fields}, J. Pure Appl. Algebra \textbf{219} (2015), no.~11,
	5117--5136.
	
	\bibitem{LiNi83}
	R.~Lidl and H.~Niederreiter, \emph{Finite fields}, Addison--Wesley,
	Reading,
	Massachusetts, 1983.
	
%
%
	
	
	
\bibitem{Mo63} L. J. Mordell, \emph{On a special polynomial congruence and exponential sums}, 1963 Calcutta Math. Soc. Golden Jubilee Commemoration Vol, 29--32 Calcutta Math. Soc., Calcutta.

\bibitem{MoCa03} O. Moreno\ and\ F. N. Castro, Divisibility properties for covering radius of certain cyclic codes, IEEE Trans. Inform. Theory {\bf 49} (2003), no.~12, 3299--3303.	
	
\bibitem{MuPa13}	Gary L. Mullen and Daniel Panario, Handbook of Finite Fields (1st ed.), Chapman and Hall/CRC, 2013.
	
%
%

\bibitem{PP20} M.~P{\'e}rez and M. Privitelli, \emph{Estimates on the number of rational solutions of variants of diagonal equations over finite fields}, Finite Fields and Appl. \textbf{68} (2020). In press.
%
%
		
	
	\bibitem{Shafarevich94}
	I.R. Shafarevich, \emph{Basic algebraic geometry: {Varieties} in
		projective
		space}, Springer, Berlin Heidelberg New York, 1994.
		
		
		\bibitem{Spackman79} K. W. Spackman, \emph{Simultaneous solutions to diagonal equations over finite fields}, J. Number Theory {\bf 11} (1979), no.~1, 100--115.
	\bibitem{Spackman81} K. W. Spackman, \emph{On the number and distribution of simultaneous solutions to diagonal congruences}, Canadian J. Math. {\bf 33} (1981), no.~2, 421--436.
%
\bibitem{T65 A} A. Tiet\"{a}v\"{a}inen, \emph{On the non-trivial solvability of some equations and systems of equations in finite fields}, Ann. Acad. Sci. Fenn. Ser. A I no. {\bf 360} (1965), 38 pp.
\bibitem{T65} A. Tiet\"{a}v\"{a}inen, \emph{On systems of linear and quadratic equations in finite fields}, Ann. Acad. Sci. Fenn. Ser. A I no. {\bf 382} (1965), 5 pp.
	\bibitem{T64} A. Tiet\"{a}v\"{a}inen,\emph{ On the non-trivial solvability of some systems of equations in finite fields}, Ann. Univ. Turku. Ser. A I {\bf 71} (1964), 5 pp.
	\bibitem{T67} A. Tiet\"{a}v\"{a}inen, \emph{On the solvability of equations in incomplete finite fields}, Ann. Univ. Turku. Ser. A I {\bf 102} (1967), 13 pp.
	\bibitem{Vogel84}
	W.~Vogel, \emph{Results on {B\'ezout}'s theorem}, Tata Inst. Fundam.
	Res. Lect.
	Math., vol.~74, Tata Inst. Fund. Res., Bombay, 1984.
\bibitem{Weil49} A. Weil, \emph{Numbers of solutions of equations in finite fields}, Bull. Amer. Math. Soc. {\bf 55} (1949), 497--508.
\bibitem{W98} J. Wolfmann, \emph{Some systems of diagonal equations over finite fields}, Finite Fields Appl. {\bf 4} (1998), no.~1, 29--37.
%
\bibitem{Zh10} X. Zeng L. Hu, W. Jiang, Q. Yue and X. Cao, \emph{The weight distribution of a class of $p$-ary cyclic codes}, Finite Fields Appl. {\bf 16} (2010), no.~1, 56--73.
%
\bibitem{Zh15} D. Zheng,  X. Wang, X. Zeng and  L. Hu, \emph{The weight distribution of a family of $p$-ary cyclic codes}, Des. Codes Cryptogr. {\bf 75} (2015), no.~2, 263--275.

 






\end{thebibliography}
\end{document}